\newcommand{\Acal}{\mathcal{A}}
\newcommand{\Dcal}{\mathcal{D}}
\newcommand{\Ccal}{\mathcal{C}}
\def\namedlabel#1#2{\begingroup
    #2%
    \def\@currentlabel{#2}%
    \phantomsection\label{#1}\endgroup
}
\theoremstyle{plain}
\newtheorem{theorem}{Theorem}[section]
\newtheorem{lemma}[theorem]{Lemma}
\newtheorem{prop}[theorem]{Proposition}
\newtheorem*{prop*}{Proposition}
\newtheorem{cor}[theorem]{Corollary}
\theoremstyle{definition}
\newtheorem{remark}[theorem]{Remark}
\newtheorem{construction}[theorem]{Construction}
\newcommand{\id}{\text{Id}}
\newcommand{\set}{\textbf{Sets}}
\newcommand{\cat}{\textbf{Cat}}
\newcommand{\Ord}{\textbf{Ord}}
\newcommand{\op}{\text{op}}
\DeclareMathOperator*{\colim}{Colim}
\DeclareMathOperator*{\Lim}{Lim}
\DeclareMathOperator{\Hom}{Hom}
\DeclareMathOperator{\Ind}{Ind}
\DeclareMathOperator{\Nat}{Nat}
\DeclareMathOperator{\Fun}{Fun}
\begin{document}

\pagestyle{plain}
\title{When does $\operatorname{Ind}_\kappa(C^I) \simeq \operatorname{Ind}_\kappa(C)^I$?}

\date{}

\author{Simon Henry}


\maketitle

\begin{abstract} We investigate under which condition the $\kappa$-ind completion of a functor category $C^I$ is equivalent to the category of functors from $I$ to the $\kappa$-ind completion of $C$. A published theorem implies this is true for any Cauchy complete category $C$ and $\kappa$-small category $I$, but we show this is not the case in general. We prove two results that seem to cover all applications of this incorrect theorem we could find in the literature: The result holds if $C$ has $\kappa$-small colimits and $I$ is $\kappa$-small, or if $C$ is an arbitrary category and $I$ is well-founded and $\kappa$-small. In both cases, we show that the conditions are optimal in the sense that the result holds for all $C$ if and only if $I$ satisfies the given assumption.
\end{abstract}

\vspace{-0.2cm}

\renewcommand{\thefootnote}{\fnsymbol{footnote}} 
\footnotetext{\emph{2020 Mathematics Subject Classification.} 18A25, 18C35}
\footnotetext{\emph{email:} shenry2@uottawa.ca}
\renewcommand{\thefootnote}{\arabic{footnote}}


\tableofcontents

\section{Introduction}

Given $\kappa$ a regular cardinal and $C$ a category we denote by $\Ind_\kappa(\Ccal)$ the $\kappa$-ind completion of $\Ccal$, that is the pseudo-initial object in the locally full subcategory of $\Ccal \backslash \cat$ whose objects have $\kappa$-filtered colimits and morphisms are functors preserving these $\kappa$-filtered colimits. $\Ind_\kappa(\Ccal)$ can be explicitly described as the full subcategory of the presheaf category $\set^{\Ccal^\op}$ of functors that are small $\kappa$-directed colimits of representables. If $\Ccal$ is small this is also equivalent to the category of $\kappa$-flat functors $\Ccal^\op \to \set$.

The construction $\Ind_\kappa$ is a covariant endofunctor functor of the bicategory of locally small categories. In particular, for each $i \in I$ the evaluation functor $ev_c: \Ccal^I \to \Ccal$, induces a functor preserving $\kappa$-filtered colimits $\Ind_\kappa(C^I) \to \Ind_\kappa(C)$, which together induce a functor:

\[ E^I_{\Ccal,\kappa} : \Ind_\kappa(C^I) \to \Ind_\kappa(C)^I \]

which also preserves $\kappa$-filtered colimits.

The goal of this paper is to investigate under which condition on $C, \kappa$ and $I$ this functor $E^I_{\Ccal, \kappa}$ is an equivalence.

This is also closely related to the question of whether, given an accessible category the category $A^I$ is accessible, and what the locally $\kappa$-presentable objects of this category are:

\begin{prop}\label{prop:Acc_vs_ind} Let $A$ be a $\kappa$-accessible category, with $A_\kappa$ its full subcategory of $\kappa$-presentable objects. and $I$ any category, then the following condition are equivalent:

  \begin{enumerate}
  \item The functor
    \[ E^I_{A_\kappa, \kappa} : \Ind_\kappa(A_\kappa^I) \to \Ind_\kappa(A_\kappa)^I \]
    is an equivalence.

  \item The category of functors $I \to A$ is $\kappa$-accessible, with its $\kappa$-presentable objects being the functors $I \to A_\kappa$.

  \end{enumerate}
  
\end{prop}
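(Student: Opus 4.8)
The plan is to recognize $E^I_{A_\kappa,\kappa}$ as the canonical comparison functor attached to a fully faithful functor, and then to read the equivalence of conditions (1) and (2) directly off the standard recognition criterion for $\kappa$-ind-completions. Fix an equivalence $A\simeq\Ind_\kappa(A_\kappa)$; it identifies $\Ind_\kappa(A_\kappa)^I$ with $A^I$, and the pointwise inclusion $A_\kappa\hookrightarrow A$ induces a functor $j\colon A_\kappa^I\to A^I$. Unwinding the construction of $E^I_{A_\kappa,\kappa}$ from the evaluation functors $ev_i$, one checks that its restriction along the canonical embedding $A_\kappa^I\hookrightarrow\Ind_\kappa(A_\kappa^I)$ is $j$; since $\Ind_\kappa(A_\kappa^I)$ is the free cocompletion of $A_\kappa^I$ under $\kappa$-filtered colimits and $E^I_{A_\kappa,\kappa}$ preserves such colimits, it is, up to canonical equivalence, the unique $\kappa$-filtered-colimit-preserving extension of $j$.

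Before applying the criterion I would record three facts, all unconditional. First, $A^I$ has all $\kappa$-filtered colimits, computed pointwise, because $A$ does. Second, $j$ is fully faithful: $\Nat(F,G)$ is computed the same way whether we view $F,G$ as functors into $A_\kappa$ or into $A$, since $A_\kappa\subseteq A$ is a full subcategory. Third, $A_\kappa$ is closed under retracts in $A$ (it is the full subcategory of $\kappa$-presentable objects), hence Cauchy complete, hence so is the functor category $A_\kappa^I$; consequently the $\kappa$-presentable objects of $\Ind_\kappa(A_\kappa^I)$ are exactly (the image of) $A_\kappa^I$.

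Now recall the recognition theorem: for an essentially small Cauchy complete category $\Bcal$ and a fully faithful functor $j\colon\Bcal\to\Dcal$ into a category with $\kappa$-filtered colimits, the induced functor $\Ind_\kappa(\Bcal)\to\Dcal$ is an equivalence if and only if (a) every object in the image of $j$ is $\kappa$-presentable in $\Dcal$ and (b) every object of $\Dcal$ is a $\kappa$-filtered colimit of objects in the image of $j$; and in that case $\Dcal$ is $\kappa$-accessible with $\Dcal_\kappa$ equal to the image of $j$. Taking $\Bcal=A_\kappa^I$ and $\Dcal=A^I$, condition (1) becomes equivalent to the conjunction of: every functor $I\to A_\kappa$ is $\kappa$-presentable in $A^I$, and every functor $I\to A$ is a $\kappa$-filtered colimit in $A^I$ of functors $I\to A_\kappa$. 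It remains to see this conjunction is equivalent to (2). If (2) holds, the functors $I\to A_\kappa$ are by hypothesis precisely the $\kappa$-presentable objects of $A^I$, and in any $\kappa$-accessible category every object is a $\kappa$-filtered colimit of $\kappa$-presentables, so both clauses hold. Conversely, if both clauses hold, then $A_\kappa^I$ is an essentially small family of $\kappa$-presentable objects of $A^I$ that generates $A^I$ under $\kappa$-filtered colimits, and $A^I$ has such colimits, so by the recognition theorem $A^I$ is $\kappa$-accessible with $(A^I)_\kappa$ the retract-closure of $A_\kappa^I$; since $A_\kappa^I$ is Cauchy complete this retract-closure is $A_\kappa^I$ itself, which is (2).

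I expect the main obstacles to be of a bookkeeping nature rather than conceptual: carefully identifying $E^I_{A_\kappa,\kappa}$ with the canonical comparison functor (the categorical content of the proposition is entirely carried by the recognition theorem), and, when $I$ is not small, the set-theoretic point that $A_\kappa^I$ and the ind-completions involved remain legitimate categories — this is where the regularity of $\kappa$ and the conventions on $\Ind_\kappa$ fixed in the introduction enter.
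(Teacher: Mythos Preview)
Your argument is correct and is essentially a fully unpacked version of the paper's one-line proof, which simply invokes the characterization ``$A$ is $\kappa$-accessible iff $A=\Ind_\kappa(A_\kappa)$'' together with the identification $\Ind_\kappa(A_\kappa)^I\simeq A^I$. The recognition theorem you state is precisely the content hidden behind that one line, so the approaches coincide; your version has the virtue of making explicit why the $\kappa$-presentable objects of $A^I$ come out to be exactly $A_\kappa^I$ (via Cauchy completeness of $A_\kappa^I$), which the paper leaves implicit.
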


\begin{proof}
This follows immediately from the fact that a category $A$ is $\kappa$-accessible if and only if $A = \Ind_\kappa(A_\kappa)$.
\end{proof}

From now on, if $A$ is a $\kappa$-accessible category, we denote by $A_\kappa$ the category of $\kappa$-presentable objects of $A$.

Here it should be noted that a category $A$ is accessible if and only if it is of the form $A \simeq \Ind_\kappa(\Ccal)$ with $\Ccal$ a small category. Moreover, in this case, the $\kappa$-presentable objects of $A$ are exactly the retracts of objects of $\Ccal$. So, a category is of the form $A_\kappa$ for $A$ a $\kappa$-accessible category if and only if it is Cauchy complete. Hence, the question of whether, for any accessible category $A$, the functor category $A^I$ is $\kappa$-accessible with its $\kappa$-presentable objects being the functor $I \to A_\kappa$, is exactly the same as the question of whether $E^I_{\Ccal,\kappa}$ is an equivalence for any Cauchy complete category $\Ccal$.

In \cite{makkai1988strong}, Makkai claims (As theorem 5.1) that for any $\kappa$-accessible categories $A$ and any $\kappa$-small category $I$ then the category of functors $I \to A$ is $\kappa$-accessible, with its $\kappa$-accessible objects being the functors $I \to A_\kappa$. This would imply that $E^I_{\Ccal,\kappa}$ is an equivalence for all Cauchy complete category $\Ccal$ for all $I$ a $\kappa$-small category. We will show that this is not the case - and hence that Makkai's theorem is incorrect. This result is used in a few places throughout the literature, the author is aware of \cite{rogers2021toposes}, \cite{jacqmin2021stability}, \cite{carboni2001syntactic} and \cite{di2020gabriel}. However, in each of these cases, it seems the use of Makkai's theorem can be replaced by one of the two (correct) theorems below:

\begin{theorem}\label{main_theorem_loc_pres} For a category $I$ the following conditions are equivalent:

  \begin{enumerate}
  \item[\namedlabel{main_theorem_loc_pres:acc_cat}{(L1)}] For any locally $\kappa$-presentable category $A$, the category $A^I$ is locally $\kappa$-presentable and its $\kappa$-presentable objects are the functors $I \to A_\kappa$.
  \item[\namedlabel{main_theorem_loc_pres:ind_cat}{(L2)}] For every category $\Ccal$ with all $\kappa$-small colimits, the functor

    \[ E^I_{\Ccal,\kappa} : \Ind_\kappa(C^I) \to \Ind_\kappa(C)^I \]

    is an equivalence.

  \item[\namedlabel{main_theorem_loc_pres:small}{(L3)}] $I$ is essentially $\kappa$-small (that is is equivalent to a $\kappa$-small category).
  \end{enumerate}
  
\end{theorem}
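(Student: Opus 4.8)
The plan is to prove the cycle $\ref{main_theorem_loc_pres:small} \Rightarrow \ref{main_theorem_loc_pres:ind_cat} \Rightarrow \ref{main_theorem_loc_pres:acc_cat} \Rightarrow \ref{main_theorem_loc_pres:small}$, treating the first implication as the substantive one and the other two as essentially formal.

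**Step 1: $\ref{main_theorem_loc_pres:small} \Rightarrow \ref{main_theorem_loc_pres:ind_cat}$.** Assume $I$ is $\kappa$-small and $\Ccal$ has all $\kappa$-small colimits. We want $E^I_{\Ccal,\kappa}$ to be an equivalence. Since $E^I_{\Ccal,\kappa}$ preserves $\kappa$-filtered colimits and $\Ind_\kappa(\Ccal^I)$ is generated under $\kappa$-filtered colimits by representables $\Ccal^I$, it suffices (by a standard argument) to check that $E^I_{\Ccal,\kappa}$ is fully faithful on representables and that every object of $\Ind_\kappa(\Ccal)^I$ is a $\kappa$-filtered colimit of objects coming from $\Ccal^I$. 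Full faithfulness on representables is the statement that for $F, G \in \Ccal^I$ the natural map $\Hom_{\Ccal^I}(F,G) \to \prod_{i \in I}\Hom_{\Ccal}(F(i),G(i))\times\ldots$ — more precisely the canonical map into the limit computing natural transformations inside $\Ind_\kappa(\Ccal)$ — is a bijection, which holds because the Yoneda embedding $\Ccal \hookrightarrow \Ind_\kappa(\Ccal)$ is fully faithful and limits are computed pointwise. For essential surjectivity, take $X \colon I \to \Ind_\kappa(\Ccal)$; write each $X(i)$ as a $\kappa$-filtered colimit of representables. The key point, and the place where $\kappa$-smallness of $I$ and $\kappa$-small cocompleteness of $\Ccal$ both enter, is that one can organize these presentations coherently: using that $\Ccal^I$ has $\kappa$-small colimits (computed pointwise, since $\Ccal$ does) and that $I$ has $<\kappa$ arrows, one builds a single $\kappa$-filtered diagram in $\Ccal^I$ whose colimit in $\Ind_\kappa(\Ccal)^I$ is $X$. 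Concretely, I expect to present $X$ as a colimit over the $\kappa$-filtered category of ``cocones of representables mapping to $X$'' and use the $\kappa$-small cocompleteness of $\Ccal^I$ to show this indexing category is $\kappa$-filtered; the hypothesis $|I|<\kappa$ is what makes the relevant diagrams $\kappa$-small so that $\Ccal^I$ actually has the needed colimits.

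**Step 2: $\ref{main_theorem_loc_pres:ind_cat} \Rightarrow \ref{main_theorem_loc_pres:acc_cat}$.** Let $A$ be locally $\kappa$-presentable; then $A \simeq \Ind_\kappa(A_\kappa)$ and $A_\kappa$ has all $\kappa$-small colimits (it is closed under $\kappa$-small colimits in $A$). Applying \ref{main_theorem_loc_pres:ind_cat} to $\Ccal = A_\kappa$ gives $\Ind_\kappa(A_\kappa^I) \simeq \Ind_\kappa(A_\kappa)^I = A^I$, so $A^I$ is $\kappa$-accessible; local presentability follows since $A^I$ is also complete (limits pointwise). That the $\kappa$-presentable objects are exactly the functors $I \to A_\kappa$ is then read off from \cref{prop:Acc_vs_ind} together with the fact that $A_\kappa^I$ is already Cauchy complete (it has $\kappa$-small colimits), so no extra retracts appear.

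**Step 3: $\ref{main_theorem_loc_pres:acc_cat} \Rightarrow \ref{main_theorem_loc_pres:small}$.** This is the ``optimality'' direction: if $I$ is not essentially $\kappa$-small, exhibit a locally $\kappa$-presentable $A$ for which $A^I$ fails the conclusion. The natural candidate is $A = \set$. Then $A^I = \set^I$ is always locally presentable, but its $\kappa$-presentable objects are not the functors valued in $\set_\kappa = \{\kappa\text{-small sets}\}$ unless $I$ is essentially $\kappa$-small: the constant functor at a one-point set is $\kappa$-presentable iff $\Hom(\Delta 1, -) = \Lim_I$ preserves $\kappa$-filtered colimits, and $\colim_j$ over a $\kappa$-filtered $J$ does not commute with $\Lim_I$ in general unless $I$ is $\kappa$-small. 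More carefully, I would argue: if $I$ has more than $<\kappa$ isomorphism classes of objects, the coproduct of representables $\coprod_{i} \Hom_I(i,-)$ shows $\set^I$ has a $\kappa$-presentable object that is not pointwise $\kappa$-small; if $I$ is essentially small but some hom-set $\Hom_I(i,j)$ is not $\kappa$-small, a similar representable argument applies; and if $I$ is essentially $<\kappa$ in both objects and morphisms then it is equivalent to a $\kappa$-small category. So the obstruction to \ref{main_theorem_loc_pres:acc_cat} is exactly the failure of $\kappa$-smallness.

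**Expected main obstacle.** The genuine content is Step 1, and within it the essential surjectivity of $E^I_{\Ccal,\kappa}$: producing, from a pointwise $\kappa$-filtered presentation of $X \colon I \to \Ind_\kappa(\Ccal)$, a single $\kappa$-filtered diagram in $\Ccal^I$ converging to it. The coherence bookkeeping — ensuring the indexing category is $\kappa$-filtered and that its colimit is computed correctly pointwise — is where $\kappa$-small cocompleteness of $\Ccal$ is indispensable (this is precisely what fails in Makkai's erroneous general statement, and the role of this hypothesis is what separates \cref{main_theorem_loc_pres} from the other main theorem about well-founded $I$). I would isolate this as a lemma: for $\Ccal$ with $\kappa$-small colimits and $I$ $\kappa$-small, every $X \in \Ind_\kappa(\Ccal)^I$ is a canonical $\kappa$-filtered colimit of functors in (the image of) $\Ccal^I$.
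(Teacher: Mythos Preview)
Your overall cycle is reasonable and Step 2 is fine, but Steps 1 and 3 both have genuine gaps.

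\textbf{Step 3.} Your arguments do not establish that $I$ must be essentially $\kappa$-small. The $\Delta 1$ argument only shows that $\Lim_I$ preserves $\kappa$-filtered colimits, which is strictly weaker: if $I$ has a terminal object (or more generally a $\kappa$-small initial subcategory), $\Lim_I$ preserves all colimits regardless of the size of $I$. The ``coproduct of representables'' argument is also off: $\coprod_i \Hom_I(i,-)$ is not $\kappa$-presentable when the coproduct has $\geqslant \kappa$ terms, so it does not exhibit a $\kappa$-presentable object failing to be pointwise $\kappa$-small. Your hom-set case is fine (a representable is $\kappa$-presentable by Yoneda and not pointwise $\kappa$-small if some hom-set is large), but you are genuinely missing the case where $I$ is locally $\kappa$-small with $\geqslant \kappa$ isomorphism classes. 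The paper's route here is to apply \ref{main_theorem_loc_pres:acc_cat} not to $\set$ but to $\set^{I^\op}$: the Yoneda embedding $I \to \set^{I^\op}$ lands in $\kappa$-presentables, so by \ref{main_theorem_loc_pres:acc_cat} it is a $\kappa$-presentable object of $(\set^{I^\op})^I \simeq \set^{I^\op \times I}$, i.e.\ the bifunctor $\Hom_I$ is $\kappa$-presentable (this is \cref{prop:accessible_ends}). From there one extracts a $\kappa$-small family of arrows through which every arrow of $I$ factors, and concludes via Cauchy completion that $I$ is essentially $\kappa$-small. This is a genuinely different and more delicate argument than anything in your sketch.

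\textbf{Step 1.} Your outline is close in spirit to the paper's proof of $\ref{main_theorem_loc_pres:small} \Rightarrow \ref{main_theorem_loc_pres:acc_cat}$, but you misidentify where $\kappa$-smallness of $I$ is used. The category $\Ccal^I$ has $\kappa$-small colimits (computed pointwise) for \emph{any} $I$ once $\Ccal$ does, so that is not the obstruction. The crucial point is that objects of $\Ccal^I$ must remain $\kappa$-presentable \emph{in $\Ind_\kappa(\Ccal)^I$}: without this, your ``standard argument'' from full faithfulness on representables plus essential surjectivity does not go through, because you cannot pull the $\kappa$-filtered colimit past $\Hom(X_i,-)$ on the target side. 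This is precisely \cref{prop:accessible_ends}, and its proof (rewriting $\Hom_{A^I}(F,G)$ as an end over the $\kappa$-small twisted arrow category of $I$) is where $\kappa$-smallness of $I$ actually enters. The paper then packages the remaining argument as a density lemma, using the left adjoints $F_i(X)(j)=\coprod_{\Hom_I(i,j)} X$ to evaluation as the isomorphism-detecting family; the $\kappa$-smallness of $I$ is used a second time to ensure these $F_i$ send $A_\kappa$ into $(A_\kappa)^I$.
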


\begin{theorem}\label{main_theorem_acc} For a category $I$ the following conditions are equivalent:

  \begin{enumerate}
      \item[\namedlabel{main_theorem_acc:ind_cat}{(A1)}] \label{main_theorem_acc:ind_cat} For every category $\Ccal$ the functor

        \[ E^I_{\Ccal,\kappa} : \Ind_\kappa(C^I) \to \Ind_\kappa(C)^I \]

 is an equivalence.

      \item[\namedlabel{main_theorem_acc:ind_cat_cauchy}{(A2)}] \label{main_theorem_acc:ind_cat_cauchy} For every Cauchy complete category $\Ccal$, the functor $E^I_{\Ccal,\kappa}$ above is an equivalence.

  \item[\namedlabel{main_theorem_acc:acc_cat}{(A3)}] \label{main_theorem_acc:acc_cat} For any $\kappa$-accessible category $A$, the category $A^I$ is $\kappa$-accessible and its $\kappa$-presentable objects are the functor $I \to A_\kappa$.

  \item[\namedlabel{main_theorem_acc:small}{(A4)}] \label{main_theorem_acc:small} $I$ is essentially $\kappa$-small and well-founded in the sense of \cref{prop:weak_fw}.
  \end{enumerate}
 
\end{theorem}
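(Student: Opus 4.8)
The plan is to prove the cycle of implications \ref{main_theorem_acc:ind_cat} $\Rightarrow$ \ref{main_theorem_acc:ind_cat_cauchy} $\Rightarrow$ \ref{main_theorem_acc:acc_cat} $\Rightarrow$ \ref{main_theorem_acc:small} $\Rightarrow$ \ref{main_theorem_acc:ind_cat}, mirroring the structure of \cref{main_theorem_loc_pres} but with the extra well-foundedness condition. The implication \ref{main_theorem_acc:ind_cat} $\Rightarrow$ \ref{main_theorem_acc:ind_cat_cauchy} is trivial (specialization). For \ref{main_theorem_acc:ind_cat_cauchy} $\Rightarrow$ \ref{main_theorem_acc:acc_cat}: given a $\kappa$-accessible category $A$, write $A \simeq \Ind_\kappa(A_\kappa)$ with $A_\kappa$ Cauchy complete; then $A^I \simeq \Ind_\kappa(A_\kappa)^I \simeq \Ind_\kappa(A_\kappa^I)$ using hypothesis \ref{main_theorem_acc:ind_cat_cauchy}, and the identification of $\kappa$-presentable objects follows from \cref{prop:Acc_vs_ind} together with the standard fact that the $\kappa$-presentable objects of $\Ind_\kappa(A_\kappa^I)$ are the retracts of objects of $A_\kappa^I$, combined with the fact that a retract of a functor $I \to A_\kappa$ computed pointwise lands in $A_\kappa$ since $A_\kappa$ is Cauchy complete. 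For \ref{main_theorem_acc:acc_cat} $\Rightarrow$ \ref{main_theorem_acc:small}: one must show that if $A^I$ is $\kappa$-accessible with $\kappa$-presentables the functors $I \to A_\kappa$, then $I$ is essentially $\kappa$-small and well-founded. Essential $\kappa$-smallness is forced by \cref{main_theorem_loc_pres} (since \ref{main_theorem_acc:acc_cat} applied to locally $\kappa$-presentable $A$ gives \ref{main_theorem_loc_pres:acc_cat}, hence \ref{main_theorem_loc_pres:small}); the well-foundedness is the new ingredient and requires exhibiting, for a non-well-founded $I$, a $\kappa$-accessible $A$ whose functor category $A^I$ fails the condition — essentially a counterexample construction analogous to the one used to refute Makkai's theorem.

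The core of the theorem is the implication \ref{main_theorem_acc:small} $\Rightarrow$ \ref{main_theorem_acc:ind_cat}: if $I$ is $\kappa$-small and well-founded, then $E^I_{\Ccal,\kappa}$ is an equivalence for \emph{every} category $\Ccal$ (no colimit hypotheses on $\Ccal$). The strategy I would pursue is transfinite induction on the well-founded structure of $I$. First reduce to the case where $\Ccal$ is small (by a standard argument writing $\Ccal$ as a filtered union of small full subcategories and checking $\Ind_\kappa$ and $(-)^I$ commute with such unions, or by working with $\kappa$-flat functors directly). Then, using the well-founded rank filtration of the objects of $I$, decompose $I$ as built from "smaller" pieces: for each object $i$ of minimal rank, the slice structure lets one express a functor $I \to \Ccal$ in terms of its value at $i$ together with a functor on a strictly smaller well-founded category (the full subcategory on objects other than $i$, or a comma-category construction). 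The key point is that attaching a single "initial-type" object corresponds to a pullback/comma square of categories that $\Ind_\kappa$ handles well precisely because there are no colimits being created — one needs a lemma saying $\Ind_\kappa$ sends certain oplax colimits or Grothendieck constructions over well-founded bases to the corresponding construction, which is exactly where well-foundedness (ruling out the infinite descending chains that create genuine new colimits in $\Ind_\kappa(\Ccal^I)$) is essential.

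The main obstacle I anticipate is making the inductive step precise: one needs the right categorical decomposition of a well-founded $\kappa$-small $I$ that (a) strictly decreases some well-founded measure and (b) interacts correctly with $\Ind_\kappa$. The naive "remove a minimal object" operation changes the indexing category in a way that may not directly give a functor category, so the decomposition likely has to be phrased via a Grothendieck construction $I \simeq \int_{i_0} J$ or via a pushout of categories glued along a discrete set, and one must verify a Beck–Chevalley-type compatibility: $\Ind_\kappa$ applied to the functor category over the glued base equals the corresponding limit of $\Ind_\kappa$'s of functor categories over the pieces. Establishing this compatibility lemma — essentially that $\Ind_\kappa(\Ccal^{I})$ can be computed as a (bi)limit indexed by the poset of "downward-closed approximations" to $I$, and that this limit is preserved because well-foundedness guarantees every $\kappa$-flat functor is supported on such an approximation — is the technical heart. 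A secondary subtlety is the fully-faithfulness half of the equivalence: even when $E^I_{\Ccal,\kappa}$ is essentially surjective one must check it is fully faithful, which amounts to a compatibility of hom-sets under the same inductive decomposition and should follow from the same limit description once it is in place.
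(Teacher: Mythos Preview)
Your high-level cycle and the easy implications are fine, but the two hard directions are missing the paper's central idea, namely the auxiliary categories $I^{(\alpha)}$ of \cref{cstr:Ialpha}. These are used \emph{twice}: once to build the counter-example for \ref{main_theorem_acc:ind_cat_cauchy} $\Rightarrow$ \ref{main_theorem_acc:small}, and once as the target of the induction for \ref{main_theorem_acc:small} $\Rightarrow$ \ref{main_theorem_acc:ind_cat}.

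For \ref{main_theorem_acc:ind_cat_cauchy} $\Rightarrow$ \ref{main_theorem_acc:small}, you only say ``essentially a counterexample construction analogous to the one used to refute Makkai's theorem'' --- but the construction \emph{is} the content here. The paper takes $\Ccal = I^{(\kappa)}$ (which is Cauchy complete, having no non-trivial idempotents), builds a specific functor $E: I \to \Ind_\kappa(\Ccal)$ by $E_x = \colim_{\alpha<\kappa}(x,\alpha)$, and shows that if $E$ were a $\kappa$-filtered colimit of functors $I \to \Ccal$ then, after composing with the projection $\pi_I$, the identity of $I$ would be a retract of some $\pi_I \circ F^j$ --- exactly condition \ref{prop:weak_fw:section_retract}. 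Without this, your implication has no proof.

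For \ref{main_theorem_acc:small} $\Rightarrow$ \ref{main_theorem_acc:ind_cat}, the paper does \emph{not} induct on the well-founded structure of $I$. You correctly identify the obstacle to doing so: removing a minimal object does not give another functor category, and the required gluing lemma is unclear. The paper sidesteps this entirely. It proves, by induction on the ordinal $\alpha<\kappa$, that $E^{I^{(\alpha)}}_{\Ccal,\kappa}$ is an equivalence for \emph{any} $\kappa$-small $I$ (\cref{prop:Result_for_Ialpha}); the successor step adds one layer of objects with no arrows among themselves, and the limit step uses that the restrictions $\Ccal^{I^{(\alpha)}} \to \Ccal^{I^{(\beta)}}$ are cartesian fibrations (\cref{lem:sieve_restr_are_fib}) so that $\Ind_\kappa$ preserves the tower limit (\cref{prop:tower_limits}). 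Then one invokes the retract characterization \ref{prop:weak_fw:section_iso} of well-foundedness: a well-founded $\kappa$-small $I$ is a retract of some $I^{(\alpha)}$, so $E^I_{\Ccal,\kappa}$ is a retract of the equivalence $E^{I^{(\alpha)}}_{\Ccal,\kappa}$ and is therefore an equivalence. Fully-faithfulness, incidentally, holds already for any essentially $\kappa$-small $I$ by \cref{prop:E_fullyfaithful}; only essential surjectivity needs the induction.
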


We refer the reader to \cref{prop:strict_wf} and \cref{prop:weak_fw} for various equivalent definitions of well-founded categories, but one of these characterizations is that $I$ has no non-trivial endomorphisms and that its posetal reflection is a well-founded poset. In particular, in the case of $\kappa = \omega$, condition \ref{main_theorem_acc:small} means that $I$ is equivalent to a finite category with no non-identity endomorphisms. That fact that $\Ind(C^I) = \Ind(C)^I$ for such category was already proved as Proposition 8.8.5 of exposé I of \cite{SGA4I}, as well as in C.Meyer PhD Thesis (page 55) \cite{meyer1983completion}. So in this case, our contribution is only to show that this condition is necessary.

Similarly, Proposition 5.3.5.15 from \cite{lurie2009higher} shows in the framework of $\infty$-categories that $E^I_{\Ccal,\kappa}$ is an equivalence for any regular cardinal $\kappa$ and any $\infty$-category $\Ccal$ when $I$ is a finite poset. This result can be applied as is to $1$-categories, so it does recover a special case of our \cref{main_theorem_acc}, this time beyond the case $\omega = \kappa$, but with less general conditions on the category $I$.

For an explicit counter-example to Makkai's theorem, the reader should go to \cref{subsec:acc_ind_implies_small}, where we show, using an explicit construction, that point \ref{main_theorem_acc:ind_cat_cauchy}, or equivalently \ref{main_theorem_acc:acc_cat}, implies point \ref{main_theorem_acc:small} in \cref{main_theorem_acc}. In particular, for any $\kappa$-small category $I$ which is \emph{not} well-founded, we will build a category $C = I^{(\kappa)}$ (see \cref{cstr:Ialpha}) so that the accessible category $A = \Ind_\kappa(C)$, is such that not every functor in $A^I$ is a $\kappa$-filtered colimit of functors $I \to A_\kappa $ (here $C = A_\kappa$ because $C = I^{(\kappa)}$ will be Cauchy-complete).

It should be noted that the requirement in conditions \ref{main_theorem_acc:acc_cat} and \ref{main_theorem_loc_pres:acc_cat} that the $\kappa$-presentable objects of $A^I$ are the functors $I \to A^\kappa$ is absolutely essential to both theorems. For example, in the case of locally presentable categories, we have

\begin{theorem}\label{th:C^I_always_loc_pres} Let $\Ccal$ be a locally $\kappa$-presentable category, and $I$ be any small category. Then the category of functors $\Ccal^I$ is locally $\kappa$-presentable.
 \end{theorem}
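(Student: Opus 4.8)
The plan is to prove that $\Ccal^I$ is locally $\kappa$-presentable directly, without going through the question of whether $E^I_{\Ccal,\kappa}$ is an equivalence — indeed the whole point of stating this theorem is that $\Ccal^I$ is locally presentable even when its $\kappa$-presentable objects are \emph{not} the functors $I \to \Ccal_\kappa$. First I would recall the standard characterizations of local presentability: a category is locally $\kappa$-presentable if and only if it is cocomplete and has a small set of $\kappa$-presentable generators, or equivalently (Gabriel–Ulmer) if it is a $\kappa$-accessible category with all small colimits, or equivalently if it is a full reflective subcategory of a presheaf category $\set^{\Dcal^{\op}}$ closed under $\kappa$-filtered colimits.

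The cleanest route uses the reflective-subcategory characterization. Since $\Ccal$ is locally $\kappa$-presentable, fix a small full subcategory $\Dcal \subseteq \Ccal$ (for instance $\Dcal = \Ccal_\kappa$, or a small dense generator) together with a fully faithful right adjoint embedding $\Ccal \hookrightarrow \set^{\Dcal^{\op}}$ whose left adjoint (the reflection) preserves $\kappa$-filtered colimits; this is exactly what it means for $\Ccal$ to be locally $\kappa$-presentable. Applying $(-)^I$ is a $2$-functor, so we get a fully faithful embedding $\Ccal^I \hookrightarrow \left(\set^{\Dcal^{\op}}\right)^I \simeq \set^{(\Dcal^{\op} \times I)}$, and it has a left adjoint obtained by applying the reflection pointwise. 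Colimits and $\kappa$-filtered colimits in functor categories are computed pointwise, so the pointwise reflection still preserves $\kappa$-filtered colimits and is still a reflection onto $\Ccal^I$. Thus $\Ccal^I$ is a full subcategory of the presheaf category $\set^{(\Dcal \times I^{\op})^{\op}}$, reflective with $\kappa$-filtered-colimit-preserving reflection, hence locally $\kappa$-presentable. (Here I use that $I$ is small so that $\Dcal \times I^{\op}$ is small.)

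Alternatively, and perhaps more transparently, one can argue: $\Ccal^I$ is cocomplete because $\Ccal$ is and (co)limits in $\Ccal^I$ are pointwise; it remains to exhibit a small set of $\kappa$-presentable objects that jointly generate. For each $i \in I$ and each $c \in \Ccal_\kappa$, the functor $i_! c : I \to \Ccal$ given by left Kan extension of $c$ along $\{i\} \hookrightarrow I$ is $\kappa$-presentable in $\Ccal^I$: indeed $\Hom_{\Ccal^I}(i_! c, -) \cong \Hom_{\Ccal}(c, \mathrm{ev}_i(-))$, and $\mathrm{ev}_i$ preserves $\kappa$-filtered colimits (they are pointwise) while $\Hom_\Ccal(c,-)$ does by hypothesis. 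The set of all such $i_! c$, as $i$ ranges over (a skeleton of) $I$ and $c$ over a generating set of $\kappa$-presentables in $\Ccal$, is small, and it is a strong generator: any $F \in \Ccal^I$ is a colimit of objects of the form $i_! c$ since for each $i$, $F(i)$ is a canonical colimit of $\kappa$-presentables and left Kan extensions are colimits of representable-type pieces. One then invokes the standard theorem (Adámek–Rosický, Theorem 1.20, or Gabriel–Ulmer) that a cocomplete category with a small strong generator consisting of $\kappa$-presentable objects is locally $\kappa$-presentable.

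**The main obstacle** is purely bookkeeping: verifying that the pointwise reflection genuinely lands in $\Ccal^I$ and that its unit is a reflection (equivalently, checking the generating family really is strong, i.e.\ detects isomorphisms). Both are routine given that everything in $\Ccal^I$ is computed pointwise and $I$ is small, so no essential difficulty arises; the content is entirely in the fact that \emph{smallness of $I$} (not well-foundedness, not $\kappa$-smallness) suffices for local presentability, in sharp contrast with \cref{main_theorem_loc_pres} and \cref{main_theorem_acc} where controlling the $\kappa$-presentable objects forces the stronger hypotheses on $I$.
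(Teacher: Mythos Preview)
Your proposal is correct, but it takes a different route from the paper. The paper gives a one-line proof by citation: it invokes Theorem~2.17 of G.~Bird's thesis, which asserts that the $2$-category of locally $\kappa$-presentable categories and $\kappa$-accessible right adjoint functors has all $\cat$-enriched pseudo-limits, computed at the level of underlying categories; the functor category $\Ccal^I$ is then recovered as the cotensor of $\Ccal$ by $I$. Your argument, by contrast, is self-contained: you either exhibit $\Ccal^I$ as a $\kappa$-accessibly reflective subcategory of the presheaf category $\set^{(\Dcal \times I^{\op})^{\op}}$ by applying the pointwise reflection, or you write down an explicit small strong generator $\{ i_! c : i \in I,\ c \in \Ccal_\kappa \}$ of $\kappa$-presentable objects and invoke the standard recognition theorem. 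Both of your arguments are sound, and the second one has the added benefit of making visible exactly which objects of $\Ccal^I$ are guaranteed to be $\kappa$-presentable (the $i_!c$, hence their retracts and $\kappa$-small colimits), which fits nicely with the paper's later discussion of why the full subcategory $(\Ccal_\kappa)^I$ need not exhaust the $\kappa$-presentables. The paper's approach is shorter but depends on a result whose proof is itself nontrivial; yours is longer but requires nothing beyond the basic characterizations of local $\kappa$-presentability.
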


 \begin{proof}
   This follows from Theorem 2.17 of G. Bird PhD thesis \cite{gregorybird}, which claims that the bicategory of locally $\kappa$-presentable category and $\kappa$-accessible right adjoint functors between them has all $\cat$-enriched pseudo limits and they are preserved by the forgetful functor to $\cat$. The functor category $\Ccal^I$ corresponds to the co-tensor for the locally $\kappa$-presentable category $\Ccal$ by the category $I$.
 \end{proof}

 Hence in \cref{main_theorem_loc_pres}, condition \ref{main_theorem_loc_pres:acc_cat} could be rephrased as simply: the $\kappa$-presentable objects of $A^I$ are exactly the functor $I \to A_\kappa$.

 \bigskip

Finally, after the publication of a first preprint version of the present paper. Leonid Positelski published a result that significantly improved some aspect of our \cref{main_theorem_loc_pres} by showing the requirement that the category $A$ is locally presentable can be considerably weakened, without automatically falling under the scope of \cref{main_theorem_acc}. More precisely, theorem 6.1 of \cite{positselski2023notes} assert that:

 \begin{theorem}[Positelski \cite{positselski2023notes}]\label{PositelskiTh} Let $\kappa$ be a regular cardinal and $\lambda < \kappa$ another infinite cardinal. If $I$ is a $\kappa$-small category and $A$ is a $\kappa$-accessible category which has colimits of $\lambda$-indexed chains, then the category $A^I$ is $\kappa$-accessible and its $\kappa$-presentable objects are the functor $I \to A_\kappa$. \end{theorem}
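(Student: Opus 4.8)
The plan is to deduce the statement from the equivalence of the comparison functor. By \cref{prop:Acc_vs_ind}, and since $A_\kappa^I$ is Cauchy complete (as $A_\kappa$ is, so that its objects automatically exhaust the $\kappa$-presentable objects of $A^I$ once $A^I$ is known to be $\kappa$-accessible with $A_\kappa^I$ generating under $\kappa$-filtered colimits), it is enough to show that $E^I_{A_\kappa,\kappa}\colon\Ind_\kappa(A_\kappa^I)\to\Ind_\kappa(A_\kappa)^I$ is an equivalence. Writing $C:=A_\kappa$, one has $\Ind_\kappa(C)=A$, and $C$ again has colimits of $\lambda$-indexed chains because the inclusion $C\hookrightarrow A$ creates them (a $\lambda$-indexed chain is a $\kappa$-small diagram of $\kappa$-presentable objects, so its colimit is $\kappa$-presentable). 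Thus it suffices to prove the self-contained assertion: \emph{if $C$ has colimits of $\lambda$-indexed chains for some infinite $\lambda<\kappa$ and $I$ is $\kappa$-small, then $E^I_{C,\kappa}$ is an equivalence.} I would establish this in the two usual steps, full faithfulness and essential surjectivity, the difficulty being entirely in the second.

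Full faithfulness does not use the chain hypothesis. The functor $E:=E^I_{C,\kappa}$ preserves $\kappa$-filtered colimits, it restricts on representables to the fully faithful functor $C^I\hookrightarrow\Ind_\kappa(C)^I$, and it sends every $G\in C^I$ to a $\kappa$-presentable object of $\Ind_\kappa(C)^I$: indeed $\Hom_{\Ind_\kappa(C)^I}(G,-)=\int_{i\in I}\Hom_{\Ind_\kappa(C)}(G(i),-)$ is a limit, indexed by the twisted arrow category of $I$ — which is $\kappa$-small whenever $I$ is — of functors preserving $\kappa$-filtered colimits, and $\kappa$-small limits commute with $\kappa$-filtered colimits in $\set$. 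The standard argument (resolve objects of $\Ind_\kappa(C^I)$ by $\kappa$-filtered colimits of representables and use the three facts above) then gives that $E$ is fully faithful.

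For essential surjectivity I would fix $F\colon I\to\Ind_\kappa(C)$ and show it is a pointwise $\kappa$-filtered colimit of functors $I\to C$. Concretely, I would prove that the comma category $\mathcal{D}:=C^I\!\downarrow\!F$, with objects the pairs $(G\colon I\to C,\ \eta\colon G\Rightarrow F)$, is $\kappa$-filtered and that each evaluation $\mathcal{D}\to C\!\downarrow\!F(i)$ is cofinal; the latter lets one compute the pointwise colimit of the tautological diagram on $\mathcal{D}$ as the canonical presentation $F(i)=\colim_{C\downarrow F(i)}$. Both assertions reduce to the same construction: given a $\kappa$-small diagram in $\mathcal{D}$ (for $\kappa$-filteredness), or a single object $c\to F(i_0)$ of $C\!\downarrow\!F(i_0)$ (for cofinality), one must build an object of $\mathcal{D}$ dominating it. I would do this by an iterative completion: for every morphism $i\to j$ of $I$ and every object $c\to F(i)$ present at the current stage one factors $c\to F(i)\to F(j)$ through some $\kappa$-presentable object over $F(j)$ (possible since $C\!\downarrow\!F(j)$ is $\kappa$-filtered, hence has cocones for finite diagrams), also coequalizing the finitely many naturality and composition discrepancies this creates; one iterates and passes to the limit along the resulting well-ordered chains. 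Because $I$ is $\kappa$-small, each round consists of fewer than $\kappa$ factorizations, so every object stays $\kappa$-presentable, and the colimits of $\lambda$-indexed chains available in $C$ are precisely what closes the process into a genuine functor $I\to C$ over $F$ dominating the given data.

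The main obstacle is the bookkeeping of this last construction: arranging the chain along which one iterates to have a length admissible for the $\lambda$-indexed colimits one actually has, ensuring that every naturality and composition identity is satisfied from some stage on and stays satisfied in the limit, and that the limiting functor dominates the prescribed data while remaining valued in $\kappa$-presentable objects. This is also exactly the place where the failure of the naive Grothendieck-construction resolution for non-well-founded $I$ — the source of the failure of Makkai's theorem isolated in \cref{main_theorem_acc} — gets repaired: the infinite descent in $I$ that would otherwise prevent building a resolution is absorbed by the colimits of $\lambda$-indexed chains, while $\kappa$-smallness of $I$ keeps each stage of the descent small enough for the resulting colimits to remain $\kappa$-presentable.
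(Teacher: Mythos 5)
First, a point of comparison: the paper does not prove \cref{PositelskiTh} at all --- it is imported verbatim as Theorem~6.1 of \cite{positselski2023notes} --- so there is no ``paper proof'' to match your argument against; what you have written is an attempted proof of Positselski's result itself. Your reduction is fine: by \cref{prop:Acc_vs_ind} the statement is equivalent to $E^I_{A_\kappa,\kappa}$ being an equivalence, and $C=A_\kappa$ does inherit colimits of $\lambda$-indexed chains since such a chain is a $\kappa$-small diagram of $\kappa$-presentables, whose colimit in $A$ is again $\kappa$-presentable. Your full-faithfulness step is also correct and is exactly the paper's \cref{prop:accessible_ends} plus \cref{prop:E_fullyfaithful}, which indeed use only $\kappa$-smallness of $I$.

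The genuine gap is that the essential-surjectivity step --- which is the entire content of the theorem --- is described but not carried out, and the points you defer to ``bookkeeping'' are precisely where real arguments are needed. Concretely: (i) you only have colimits of chains of length exactly $\lambda$, so the transfinite iteration cannot take colimits at intermediate limit stages $\delta<\lambda$; one must instead close the chain-so-far by choosing an upper bound in the $\kappa$-filtered comma category $C\downarrow F(j)$ (possible because each stage is $\kappa$-small), reserving the single genuine $\lambda$-chain colimit for the final stage --- your text flags ``arranging the chain to have admissible length'' as an obstacle but does not resolve it. (ii) One must verify that in the colimit $G(i)=\colim_{\beta<\lambda}c_i^\beta$ all functoriality and naturality identities hold on the nose; this works because every discrepancy present at stage $\beta$ is coequalized at stage $\beta+1$ (using that the comma categories are $\kappa$-filtered, hence coequalize parallel pairs), but this has to be set up so that the fixes themselves stay coherent with the chain maps. (iii) One needs the inclusion $A_\kappa\hookrightarrow A$ (equivalently $C\hookrightarrow\Ind_\kappa(C)$) to preserve the $\lambda$-chain colimits used, so that the cocone $c_i^\beta\to F(i)$ actually induces $G(i)\to F(i)$; this is true because $\lambda$-indexed limits are $\kappa$-small and commute with $\kappa$-filtered colimits in $\set$, but it is an additional check, not automatic. (iv) Finally, $\kappa$-filteredness of $C^I\!\downarrow\!F$ and the cofinality of the evaluations both reduce to the same construction, as you say, but the cocone must also be made natural with respect to the morphisms of the given $\kappa$-small diagram, which adds further coequalization tasks to each round. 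None of these steps is wrong in spirit --- the strategy is the standard one and can be completed --- but as written your proposal is a plan whose decisive construction is asserted rather than proved, so it does not yet constitute a proof of \cref{PositelskiTh}; for the full argument (in fact via a decomposition of $A^I$ into more primitive accessible-category constructions) one still has to consult \cite{positselski2023notes}.
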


 In particular, using this and \cref{prop:Acc_vs_ind}, we obtain that $\Ind_\kappa (C^I) \simeq \Ind_\kappa(C)^I$ when $I$ is $\kappa$-small and $C$ is Cauchy complete with colimits of $\lambda$-chain for $\lambda < \kappa$ an infinite cardinal.
 
 Note that \cite{positselski2023notes} also proves similar results for more general weighted limits of accessible categories. This result also immediately gives a very good upper bound on the accessibility rank of $A^I$ in general:

 \begin{cor}[Positelski] Let $\kappa$ be a regular cardinal, $I$ a $\kappa$-small category, $A$ a $\kappa$-accessible category and $\lambda$ any regular cardinal such that $\kappa \triangleleft \lambda$. Then $A^I$ is $\lambda$-accessible and its $\lambda$-presentable objects are the functors $I \to A_\lambda$.
 \end{cor}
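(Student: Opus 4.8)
The plan is to derive this as a direct consequence of Positelski's \cref{PositelskiTh}, applied after a change of regular cardinal. Namely, I would invoke \cref{PositelskiTh} with $\lambda$ in the role of the regular cardinal there called ``$\kappa$'', and with $\kappa$ itself in the role of the auxiliary infinite cardinal there called ``$\lambda$''. (The relation $\kappa \triangleleft \lambda$ gives $\kappa \le \lambda$; we treat the case $\kappa < \lambda$, which is the one of interest.) To apply \cref{PositelskiTh} in this form, I must verify three hypotheses: that $I$ is $\lambda$-small, that $A$ is $\lambda$-accessible, and that $A$ has colimits of $\kappa$-indexed chains.

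The first is immediate: a $\kappa$-small category has fewer than $\kappa \le \lambda$ morphisms, hence is $\lambda$-small. The second holds because $A$ is $\kappa$-accessible and $\kappa \triangleleft \lambda$: it is a standard fact that $\kappa \triangleleft \lambda$ implies every $\kappa$-accessible category is $\lambda$-accessible (and then $A_\lambda$ is the closure of $A_\kappa$ under $\lambda$-small colimits, though this refinement is not needed here), so I would just recall that fact. For the third, the key observation is that the ordinal $\kappa$, being a regular cardinal, is itself a $\kappa$-filtered poset: a subset of $\kappa$ of cardinality $< \kappa$ is bounded below $\kappa$, since $\mathrm{cf}(\kappa) = \kappa$. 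Consequently a $\kappa$-indexed chain in $A$ is a $\kappa$-filtered diagram, so its colimit exists because $A$, being $\kappa$-accessible, has all $\kappa$-filtered colimits; and $\kappa$ is an infinite cardinal strictly smaller than $\lambda$, as \cref{PositelskiTh} demands.

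Once these points are settled, \cref{PositelskiTh} yields immediately that $A^I$ is $\lambda$-accessible with its $\lambda$-presentable objects being exactly the functors $I \to A_\lambda$, which is the statement of the corollary. I do not anticipate a genuine obstacle here: the only piece of real content is the remark that $\kappa$-indexed chains are $\kappa$-filtered, which is what allows the $\kappa$-filtered colimits available in an arbitrary $\kappa$-accessible category to satisfy the chain-completeness hypothesis of \cref{PositelskiTh}; everything else is cardinal bookkeeping and recalling the meaning of the relation $\triangleleft$.
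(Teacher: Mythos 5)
Your proposal is correct and follows essentially the same route as the paper: both apply \cref{PositelskiTh} with $\lambda$ playing the role of the regular cardinal and $\kappa$ playing the role of the auxiliary infinite cardinal, using $\kappa \triangleleft \lambda$ to get $\lambda$-accessibility of $A$ and the existence of $\kappa$-filtered (hence $\kappa$-chain-indexed) colimits in $A$ to meet the chain hypothesis. Your only addition is to spell out explicitly why $\kappa$-indexed chains are $\kappa$-filtered (regularity of $\kappa$), which the paper leaves implicit.
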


 Where $\kappa \triangleleft \lambda$ is the ``sharply less'' relation from \cite[Definition 2.12]{adamek1994locally}. This applies for example of $\lambda = \kappa^+$ is the succesor cardinal of $\kappa$.

 \begin{proof}
   Under the condition $\kappa \triangleleft \lambda$, the category $A$ is also $\lambda$-accessible and has $\kappa$-directed colimits. In particular it has colimits of chain indexed by $\kappa$ for $\kappa < \lambda$ an infinite cardinal, so we can apply \cref{PositelskiTh} and concludes.
 \end{proof}

This paper arose following a discussion on Mathoverflow \cite{442055}. In particular, I am especially grateful to Ben Wieland for suggesting a first counter example to the claim that $E^I_{\omega,\Ccal}$ is an equivalence when $I$ is $\omega$-small, which was the starting point to the proof in subsection \ref{subsec:acc_ind_implies_small}, and to Ivan Di Liberti for pointing me to Makkai's theorem 5.1 in \cite{makkai1988strong}.
 
 \section{Proof of \cref{main_theorem_loc_pres}.}

 The equivalence of conditions \ref{main_theorem_loc_pres:acc_cat} and \ref{main_theorem_loc_pres:ind_cat} of \cref{main_theorem_loc_pres} follows immediately from \cref{prop:Acc_vs_ind} and the fact that a $\kappa$-accessible category is locally presentable if and only if its $\kappa$-presentable objects have $\kappa$-small colimits. So we only need to show the equivalence with condition \ref{main_theorem_loc_pres:small}.

We start by observing the following equivalences:
 
 \begin{prop}\label{prop:accessible_ends} Let $I$ be a category. The following conditions are equivalents:

   \begin{enumerate}
   \item \label{prop:accessible_ends:Hom_pres} The functor
     \[ \Hom(\_,\_) : I^\op \times I \to \set \]
     is a $\kappa$-presentable object of $\set^{I^\op \times I}$.
   \item \label{prop:accessible_ends:ends} $I$-indexed ends, seen as functors:

     \[ \int_I  : \set^{I^\op \times I} \to \set \]

     preserves $\kappa$-filtered colimits.
     
   \item \label{prop:accessible_ends:arbitrary_cat} For any category $A$ with $\kappa$-filtered colimits, a functor $I \to A_\kappa$ is $\kappa$-presentable when seen as an object of $A^I$.
   \item \label{prop:accessible_ends:loc_pres_cat} For any locally $\kappa$-presentable category $A$, a functor $I \to A_\kappa$ is $\kappa$-presentable when seen as an object of $A^I$.
  
  \end{enumerate}

Moreover, all these condition holds when $I$ is an essentially $\kappa$-small category.
  
 \end{prop}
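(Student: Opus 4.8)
The plan is to prove the equivalences along the cycle $(1)\Leftrightarrow(2)$, $(2)\Rightarrow(3)\Rightarrow(4)\Rightarrow(1)$, using the ``co-Yoneda lemma for ends'' as the central tool, and then to verify $(2)$ directly under the hypothesis of the last assertion. (When $I$ is not essentially small one reads every statement below relative to a universe in which $I$ is small; this changes nothing, since $\kappa$-presentability of a fixed object is universe-stable.) For $(1)\Leftrightarrow(2)$ I would start from the natural isomorphism
\[ \Hom_{\set^{I^{\op}\times I}}(\Hom_I, F) \;\cong\; \int_I F , \]
valid for every $F\colon I^{\op}\times I\to\set$: a natural transformation $\Hom_I\Rightarrow F$ is determined by the images $\xi_i$ of the identities $\id_i\in\Hom_I(i,i)$, and naturality in $I^{\op}\times I$ is exactly the wedge condition making $(\xi_i)_i$ an element of the end. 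Since $\kappa$-filtered colimits in $\set^{I^{\op}\times I}$ are computed pointwise, the representable functor $\Hom_{\set^{I^{\op}\times I}}(\Hom_I,-)$ preserves $\kappa$-filtered colimits if and only if $\int_I$ does; the former is the definition of $(1)$ and the latter is $(2)$.

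For $(2)\Rightarrow(3)$, let $A$ have $\kappa$-filtered colimits and $G\colon I\to A_\kappa$. For $H\in A^I$ one has $\Hom_{A^I}(G,H)\cong\int_I F_H$, where $F_H\colon I^{\op}\times I\to\set$ is $F_H(i,j)=\Hom_A(G(i),H(j))$. Given a $\kappa$-filtered diagram $(H_d)$ in $A^I$ with colimit $H$ (computed pointwise), the diagram $(F_{H_d})$ in $\set^{I^{\op}\times I}$ has pointwise colimit $F$ with $F(i,j)=\colim_d\Hom_A(G(i),H_d(j))$, and as each $G(i)$ is $\kappa$-presentable, $F(i,j)\cong\Hom_A(G(i),H(j))=F_H(i,j)$, whence $F\cong F_H$ in $\set^{I^{\op}\times I}$. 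Applying $(2)$ to the $\kappa$-filtered colimit $F_H\cong\colim_d F_{H_d}$ gives $\Hom_{A^I}(G,H)\cong\int_I F_H\cong\colim_d\int_I F_{H_d}\cong\colim_d\Hom_{A^I}(G,H_d)$, so $G$ is $\kappa$-presentable in $A^I$. The implication $(3)\Rightarrow(4)$ is immediate since locally $\kappa$-presentable categories have $\kappa$-filtered colimits.

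For $(4)\Rightarrow(1)$ I would apply $(4)$ to $A=\set^{I^{\op}}$, which is locally finitely presentable and hence locally $\kappa$-presentable. The Yoneda embedding $y\colon I\to\set^{I^{\op}}$ has each $y(i)$ representable, so $\Hom_{\set^{I^{\op}}}(y(i),-)$ is evaluation at $i$ and preserves all colimits; thus $y$ factors through $(\set^{I^{\op}})_\kappa$. Under the canonical equivalence $(\set^{I^{\op}})^I\simeq\set^{I^{\op}\times I}$ the object $y$ corresponds to $\Hom_I$, so $(4)$ says precisely that $\Hom_I$ is $\kappa$-presentable, i.e.\ $(1)$.

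For the last assertion, since $(1)$--$(4)$ are now equivalent it suffices to verify $(2)$ when $I$ is essentially $\kappa$-small; after replacing $I$ by a $\kappa$-small category, the end $\int_I F$ is the equalizer of a parallel pair $\prod_{i\in I}F(i,i)\rightrightarrows\prod_{(f\colon i\to j)\in I}F(i,j)$ and is therefore a $\kappa$-small limit, both $\operatorname{ob} I$ and $\operatorname{mor} I$ being $\kappa$-small. Since $\kappa$-small limits commute with $\kappa$-filtered colimits in $\set$, $\int_I$ preserves $\kappa$-filtered colimits. I do not anticipate a real conceptual obstacle; the two points that need care are, in $(2)\Rightarrow(3)$, checking that the interchange of the end and the colimit is literally an instance of $(2)$ for the correct object of $\set^{I^{\op}\times I}$, and, in $(4)\Rightarrow(1)$, the size of $\set^{I^{\op}}$ for large $I$, handled by the universe-stability of $\kappa$-presentability noted at the outset.
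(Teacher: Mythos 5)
Your proof is correct and follows essentially the same route as the paper: the isomorphism $\Hom(\Hom_I,-)\cong\int_I$ for (1)$\Leftrightarrow$(2), the end formula for $\Hom_{A^I}$ together with $\kappa$-presentability of the values for (2)$\Rightarrow$(3), the Yoneda embedding into $\set^{I^{\op}}$ under $(\set^{I^{\op}})^I\simeq\set^{I^{\op}\times I}$ for (4)$\Rightarrow$(1), and the observation that for $\kappa$-small $I$ the end is a $\kappa$-small limit (you present it as an equalizer of products, the paper as a limit over the twisted arrow category, which is the same computation). No gaps; your explicit handling of the pointwise-colimit and size points is if anything slightly more careful than the paper's.
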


Note that, at least in the case $\kappa= \omega$, the conditions of the proposition are much weaker than $I$ being $\omega$-small, that is finite. For example, any finitely generated category can be shown to satisfies these conditions. I do not know if for $\kappa$ uncountable there are such example of non-$\kappa$-small categories satisfying these conditions. We refer to \cite{loregian2015coend} for general material about ends.

\begin{proof} The equivalence of conditions \ref{prop:accessible_ends:Hom_pres} and \ref{prop:accessible_ends:ends} is immediate because of the natural isomorphism:

  \[ \int_I A(x,x) \simeq \Nat(\Hom(x,y), A(x,y))\]

  And the fact that $\Hom$ being $\kappa$-presentable means that the functor $\Nat(\Hom(x,y), \_)$ preserve $\kappa$-filtered colimits. Condition \ref{prop:accessible_ends:ends} implies \ref{prop:accessible_ends:arbitrary_cat} because of the expression of the morphism in the category of functors $A^I$ as:

\[ \Hom_{A^I}(F,G) \simeq \int_{i \in I} \Hom_A (F(i),G(i)) \]

Hence given any filtered diagram $(G_j)_{j \in J}$ and $F: I \to A_\kappa$ a functor, we have an isomorphism

\[
  \begin{array}{ccc}
    \Hom_{A^I}(F,\colim_j G_j ) &\simeq& \int_{i \in I} \Hom_A (F(i),\colim_j G_j(i)) \\ &\simeq& \int_{i \in I} \colim_j \Hom_A (F(i),G_j(i))\\
                              &\simeq& \colim_j  \int_{i \in I}\Hom_A (F(i),G_j(i))\\
    &\simeq& \colim_j \Hom_{A^I}(F,G_j ) \\
  \end{array}
\]

showing that $F$ is indeed $\kappa$-presentable.

The implication $\ref{prop:accessible_ends:arbitrary_cat} \Rightarrow \ref{prop:accessible_ends:loc_pres_cat}$ is tautological, and finally $\ref{prop:accessible_ends:loc_pres_cat} \Rightarrow \ref{prop:accessible_ends:Hom_pres}$ follows from the identification
\[ \Fun( I , \set^{I^\op}) \simeq \Fun( I^\op \times I ,\set).\]

The category $\set^{I^\op}$ is locally $\kappa$-presentable, with the representable object being $\kappa$-presentable (this holds for any $\kappa$), hence by condition \ref{prop:accessible_ends:loc_pres_cat}, the Yoneda embeddings $I \to \set^{I^\op}$ is a $\kappa$-presentable object of this functor category, and through the equivalence above, this corresponds to the functor $\Hom : I^\op \times I \to \set$, hence concluding the proof.

Finally, if $I$ is a $\kappa$-small category, then the end involved in \ref{prop:accessible_ends:ends} can be rewritten as a limit indexed by the twisted arrow category of $I$, which is a $\kappa$-small limits, and hence it preserves $\kappa$-filtered colimits. \end{proof}

 We also mention the following corrolary of \cref{prop:accessible_ends}, which will be usefull in the proof of \cref{main_theorem_acc} later, and is also interesting in its own right. This is directly inspired by proposition 8.8.2 of \cite{SGA4I}.

\begin{cor} \label{prop:E_fullyfaithful} Let $I$ be an essentially $\kappa$-small category, or more generally a category satisfying the equivalent conditions of \cref{prop:accessible_ends}, then the functor
  \[ E^I_{\Ccal,\kappa} : \Ind_\kappa(C^I) \to \Ind_\kappa(C)^I \]
is fully faithful.
 \end{cor}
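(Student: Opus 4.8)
The plan is to reduce the statement to a general principle about $\kappa$-ind completions. Write $y$ for the Yoneda embeddings $\Ccal \to \Ind_\kappa(\Ccal)$ and $\Ccal^I \to \Ind_\kappa(\Ccal^I)$, and set $E := E^I_{\Ccal,\kappa}$. The principle I would use is: \emph{if $\mathcal E$ is a category with $\kappa$-filtered colimits and $F\colon \Ind_\kappa(\mathcal D) \to \mathcal E$ a functor preserving $\kappa$-filtered colimits such that $F\circ y$ is fully faithful and takes values among the $\kappa$-presentable objects of $\mathcal E$, then $F$ is fully faithful.} To prove this principle I would write an arbitrary pair of objects of $\Ind_\kappa(\mathcal D)$ as small $\kappa$-filtered colimits of representables, $X\simeq\colim_j y(d_j)$ and $Y\simeq\colim_l y(d'_l)$, and use that representables are $\kappa$-presentable in $\Ind_\kappa(\mathcal D)$ to get $\Hom(X,Y)\simeq\lim_j\colim_l\Hom(y d_j, y d'_l)$; applying $F$ — which preserves the $\kappa$-filtered colimit $Y\simeq\colim_l y(d'_l)$ and sends each $yd_j$ to a $\kappa$-presentable object — yields $\Hom(FX,FY)\simeq\lim_j\colim_l\Hom(Fyd_j, Fyd'_l)$, and full faithfulness of $F\circ y$ matches the two right-hand sides. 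I would then have to check the resulting isomorphism is the map induced by $F$ on hom-sets (see the last paragraph).

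Next I would verify the two hypotheses for $\mathcal D = \Ccal^I$, $\mathcal E = \Ind_\kappa(\Ccal)^I$ and $F = E$. The colimit hypothesis is immediate: $\Ind_\kappa(\Ccal)$ has $\kappa$-filtered colimits, hence so does $\Ind_\kappa(\Ccal)^I$ (computed pointwise), and $E$ preserves $\kappa$-filtered colimits by construction. For the remaining hypotheses I would first note that, since $\Ind_\kappa$ is a pseudofunctor of which $y$ is a pseudonatural transformation, $E(yF)$ is, up to isomorphism, the functor $i\mapsto y(F(i))\colon I \to \Ind_\kappa(\Ccal)$, whose values are representables of $\Ind_\kappa(\Ccal)$ and hence $\kappa$-presentable objects there. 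Applying condition \ref{prop:accessible_ends:arbitrary_cat} of \cref{prop:accessible_ends} to $A = \Ind_\kappa(\Ccal)$ — legitimate since $I$ satisfies the standing hypothesis and $\Ind_\kappa(\Ccal)$ has $\kappa$-filtered colimits — then gives exactly that $E(yF)$ is a $\kappa$-presentable object of $\Ind_\kappa(\Ccal)^I$, so $E$ takes representables to $\kappa$-presentable objects. For full faithfulness of $E$ on representables (equivalently, of $E\circ y$) I would run the chain of isomorphisms
\[
  \Hom_{\Ind_\kappa(\Ccal)^I}\!\bigl(E(yF), E(yG)\bigr) \simeq \int_{i\in I}\Hom_{\Ind_\kappa(\Ccal)}\!\bigl(yF(i), yG(i)\bigr) \simeq \int_{i\in I}\Hom_\Ccal\bigl(F(i),G(i)\bigr) \simeq \Hom_{\Ccal^I}(F,G) \simeq \Hom_{\Ind_\kappa(\Ccal^I)}(yF, yG),
\]
using the end formula for hom-sets in functor categories and the full faithfulness of the Yoneda embeddings of $\Ccal$ and of $\Ccal^I$; no condition on $I$ is needed here, only that $\set$-valued ends exist.

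The step I expect to be the only genuine obstacle is the bookkeeping behind the phrase ``one checks this is the map induced by $E$'', which occurs at two places: in the proof of the general principle and in identifying the displayed chain with the comparison map on hom-sets. In both cases one must check that the canonical identifications (Yoneda; the fact that $\Hom(-,-)$ turns colimits in the first variable into limits and, against a $\kappa$-presentable object, commutes with $\kappa$-filtered colimits in the second; the end formula) are compatible with applying $E$, i.e.\ are natural in a way that intertwines with $E$ on morphisms. This is routine but a little tedious to spell out. All the mathematical content sits in condition \ref{prop:accessible_ends:arbitrary_cat} of \cref{prop:accessible_ends}, which is precisely what forces $E$ to land, on representables, among the $\kappa$-presentable objects of the target; the rest is formal.
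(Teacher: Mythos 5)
Your proof is correct and follows essentially the same route as the paper: write both objects as $\kappa$-filtered colimits of objects of $\Ccal^I$, use that these are $\kappa$-presentable in $\Ind_\kappa(\Ccal^I)$ and, via condition \ref{prop:accessible_ends:arbitrary_cat} of \cref{prop:accessible_ends}, also in $\Ind_\kappa(\Ccal)^I$, and compare the resulting $\Lim$--$\colim$ expressions using that $E^I_{\Ccal,\kappa}$ preserves $\kappa$-filtered colimits. Your only additions --- packaging this as a general lemma about colimit-preserving functors out of $\Ind_\kappa(\Dcal)$, and explicitly checking via the end formula that $E^I_{\Ccal,\kappa}$ is fully faithful on $\Ccal^I$ --- merely make explicit steps the paper leaves tacit, without changing the argument.
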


 \begin{proof}

   Let $X$ and $Y$ be two objects of $\Ind_\kappa(C^I)$, we write them as $\kappa$-directed colimits, $X = \colim X_i$ and $Y=\colim Y_j$  of diagrams in $C^I$. In the category $\Ind_\kappa(C^I)$ we have
   \[
     \begin{array}{rcl}
       \Hom(X,Y) &=& \displaystyle \Hom(\colim_i X_i, \colim_j Y_j) \\
                 &=& \displaystyle \Lim_i \Hom(X_i, \colim Y_j) \\
       &=& \displaystyle \Lim_i \colim_j \Hom(X_i, Y_j)
     \end{array}
   \]
   as the $X_i$ are $\kappa$-presentable in $\Ind_\kappa(C^I)$. In the category $\Ind_\kappa(C)^I$ we have
   \[
     \begin{array}{rcl}
       \Hom(E^I_{\Ccal,\kappa} (X),E^I_{\Ccal,\kappa}(Y)) &=& \displaystyle \Hom(\colim_i E^I_{\Ccal,\kappa} (X_i), \colim_j Y_j) \\
                                                   &=& \displaystyle \Lim_i \Hom( X_i , \colim Y_j ) \\
                                                   &=& \displaystyle \Lim_i \colim_j \Hom( X_i , Y_j )          
     \end{array}
      \]
where we have used that the functor $E$ preserves $\kappa$-directed colimits by construction, and that by \cref{prop:accessible_ends} the $X_i \in C^I$ are $\kappa$-presentable objects in $\Ind_\kappa(C)^I$. This concludes the proof as one easily see by functoriality of the isomorphisms above that the identification $\Hom(X,Y) =  \Hom(E^I_{\Ccal,\kappa} (X),E^I_{\Ccal,\kappa}(Y))$ we obtained is induced by the action of $E^I_{\Ccal,\kappa}$.
 \end{proof}
 
 \subsection{Proof of \ref{main_theorem_loc_pres:acc_cat} or  \ref{main_theorem_loc_pres:ind_cat} $\Rightarrow$ \ref{main_theorem_loc_pres:small}}

 We fix $I$ for which the equivalent conditions \ref{main_theorem_loc_pres:acc_cat} and \ref{main_theorem_loc_pres:ind_cat} of \cref{main_theorem_loc_pres} holds. We will show that $I$ is $\kappa$-small. We first have

 \begin{lemma}\label{Lemma:I_loc_small}
    Any category $I$ satisfying conditions \ref{main_theorem_loc_pres:acc_cat}  or \ref{main_theorem_loc_pres:ind_cat} of \cref{main_theorem_loc_pres} is locally $\kappa$-small, that is has $\kappa$-small Hom sets.
  \end{lemma}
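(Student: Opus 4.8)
The plan is to apply the hypothesis to one carefully chosen test category: the category $\set$, presented as $\set \simeq \Ind_\kappa(\set_\kappa)$, where $\set_\kappa$ denotes the (essentially small, Cauchy complete) category of sets of cardinality $< \kappa$. This category has all $\kappa$-small colimits, and $\set$ is locally $\kappa$-presentable with $\set_\kappa$ as its subcategory of $\kappa$-presentable objects; so either of the conditions \ref{main_theorem_loc_pres:acc_cat} or \ref{main_theorem_loc_pres:ind_cat} of \cref{main_theorem_loc_pres} applies and tells us that $\set^I$ is locally $\kappa$-presentable and that its $\kappa$-presentable objects are precisely the functors $I \to \set_\kappa$, i.e.\ the functors $G \colon I \to \set$ such that $G(i)$ has cardinality $< \kappa$ for every object $i \in I$.

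Next I would fix an object $i_0 \in I$ and consider the representable functor $h^{i_0} = \Hom_I(i_0, -) \colon I \to \set$, which is a legitimate object of $\set^I$ since the hom-classes of a category are sets by definition. The co-Yoneda lemma provides a natural isomorphism $\Nat(h^{i_0}, G) \cong G(i_0)$ for $G \in \set^I$, i.e.\ $\Hom_{\set^I}(h^{i_0}, -)$ is the evaluation functor at $i_0$. Since colimits in $\set^I$ are computed pointwise, this evaluation functor preserves all small colimits, in particular $\kappa$-filtered ones, and therefore $h^{i_0}$ is a $\kappa$-presentable object of $\set^I$ (in fact $\omega$-presentable, uniformly in $\kappa$).

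Combining the two steps: $h^{i_0}$ is a $\kappa$-presentable object of $\set^I$, hence by the first step it must be a functor $I \to \set_\kappa$, so $\Hom_I(i_0, j) = h^{i_0}(j)$ has cardinality $< \kappa$ for every $j \in I$. Since $i_0$ was arbitrary, every hom-set of $I$ is $\kappa$-small, which is exactly the assertion of the lemma.

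I do not expect a genuine obstacle in this argument. The only points that call for a careful word are the standard identification $\set \simeq \Ind_\kappa(\set_\kappa)$ with $\set_\kappa$ as the subcategory of $\kappa$-presentable objects (so that the phrase ``its $\kappa$-presentable objects are the functors $I \to \set_\kappa$'' unwinds to the cardinality statement used above), and the fact that $h^{i_0}$ makes sense as an object of $\set^I$ even though $I$ is not assumed a priori to be locally small — but this is harmless, and indeed the argument re-derives local smallness of $I$ as a by-product, since $\set^I$ being locally $\kappa$-presentable forces it, and hence $I$, to be locally small.
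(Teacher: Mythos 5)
Your argument is correct, but it leans on a different half of the hypothesis than the paper does. The paper also tests against $A=\set$ and also focuses on the covariant representable $h^{x}=\Hom_I(x,-)$, but it only uses the \emph{generation} part of condition \ref{main_theorem_loc_pres:acc_cat}: every object of $\set^I$, in particular $h^{x}$, is a $\kappa$-filtered colimit of functors $I\to\set_\kappa$, and then a single stage $A$ of that colimit whose cocone component $\lambda\colon A\to h^{x}$ hits $\id_x\in h^x(x)$ already gives, by naturality ($\lambda_y(p\cdot e)=p$), a surjection $A(y)\twoheadrightarrow\Hom_I(x,y)$ from a $\kappa$-small set. You instead invoke the \emph{converse} part of \ref{main_theorem_loc_pres:acc_cat} — that the $\kappa$-presentable objects of $\set^I$ are exactly the functors $I\to\set_\kappa$ — after observing via Yoneda that $\Hom_{\set^I}(h^{i_0},-)\cong \mathrm{ev}_{i_0}$ preserves (pointwise) colimits, so $h^{i_0}$ is $\kappa$-presentable and hence lands in $\kappa$-small sets. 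Both routes are Yoneda in disguise and both are sound (your size caveats — that $\set\simeq\Ind_\kappa(\set_\kappa)$ with $\set_\kappa$ the $\kappa$-presentables, and that $h^{i_0}$ is a legitimate object of $\set^I$ — are handled the same way the paper implicitly handles them); the trade-off is that the paper's version needs only that the functors $I\to\set_\kappa$ form a $\kappa$-filtered-colimit-dense family in $\set^I$, a formally weaker input, while yours is shorter once the full characterization of the $\kappa$-presentables of $\set^I$ is granted.
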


  \begin{proof}
    We apply condition \ref{main_theorem_loc_pres:acc_cat} to the category $\set$, whose $\kappa$-presentable objects are the $\kappa$-small sets. It follows that for every $x \in I $ the representable functor
    \[
      \begin{array}{ccc}
        I &\to& \set \\
        y & \mapsto & \Hom(x,y)
        \end{array}
      \]
      can be written as a $\kappa$-filtered colimit of functors $I \to \set_\kappa$. In particular, there exists a functor $A:I \to \set_\kappa$ and a natural transformation $\lambda_y:A(y) \to \Hom(x,y)$, such that the identity functor $x \to x$ can be writen as $\lambda_x(e)$ for $e \in A(x)$. But it then follows that for every arrow $p:x \to y$, we have $\lambda_y(p e) = p \lambda_y(e) = p \circ \id_x = p $, hence $A(y) \to \Hom(x,y)$ is surjective, and hence $\Hom(x,y)$ is a $\kappa$-small set for all $x,y \in I$.
   \end{proof}

   We can now conclude the proof of this implication with:

   \begin{lemma}
     Any category $I$ satisfying conditions \ref{main_theorem_loc_pres:acc_cat} or \ref{main_theorem_loc_pres:ind_cat} of \cref{main_theorem_loc_pres} is essentially $\kappa$-small.
   \end{lemma}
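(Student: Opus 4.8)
By \cref{Lemma:I_loc_small}, $I$ is locally $\kappa$-small, so it remains to bound its size. The plan is to feed condition \ref{main_theorem_loc_pres:acc_cat} (equivalently \ref{main_theorem_loc_pres:ind_cat}, via \cref{prop:Acc_vs_ind}) with $A=\set$: it gives that $\set^I$ is locally $\kappa$-presentable and that its $\kappa$-presentable objects are exactly the functors $I\to\set_\kappa$. The inclusion ``$\subseteq$'' here needs only local $\kappa$-smallness: the right adjoint of $ev_i\colon\set^I\to\set$ is the power by the $\kappa$-small set $\Hom_I(i,-)$, which therefore preserves $\kappa$-filtered colimits, so $ev_i$ preserves $\kappa$-presentable objects. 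Now the $\kappa$-presentable objects of a locally $\kappa$-presentable category form an essentially small subcategory, and the Yoneda embedding $I^{\op}\hookrightarrow\set^I$ is fully faithful with image contained in it; hence $I$ is essentially small, and we may replace it by a small skeleton. Since $\kappa$ is regular and $I$ is locally $\kappa$-small, it then suffices to show that $\mathrm{ob}(I)$ has cardinality $<\kappa$.

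Suppose not. The basic tool is that $\colim_I\colon\set^I\to\set$ preserves $\kappa$-presentable objects, because its right adjoint $\Delta$ preserves $\kappa$-filtered colimits (indeed all colimits, $\Delta$ being also left adjoint to $\Lim_I$). Consider first the case in which, for every object $j$, only $\kappa$-small-many objects admit a morphism to $j$. Then $F:=\coprod_{x\in\mathrm{ob}(I)}\Hom_I(x,-)$ is pointwise $\kappa$-small — each $F(j)$ is a $\kappa$-small sum of $\kappa$-small sets — hence a $\kappa$-presentable object of $\set^I$. As each coslice $x\backslash I$ has an initial object, hence is connected, $\colim_I\Hom_I(x,-)=1$, so $\colim_I F=\coprod_{x}1=\mathrm{ob}(I)$ is $\kappa$-presentable in $\set$, i.e.\ $\kappa$-small: a contradiction.

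The remaining case — which I expect to be the main obstacle — is that some object $j_0$ receives morphisms from at least $\kappa$ pairwise non-isomorphic objects; equivalently, the slice $I_{/j_0}$ has at least $\kappa$ objects (and a terminal object $\mathrm{id}_{j_0}$). Here $F$ is no longer pointwise $\kappa$-small, so instead one must exhibit a functor $G\colon I\to\set_\kappa$ that is \emph{not} $\kappa$-presentable in $\set^I$, contradicting \ref{main_theorem_loc_pres:acc_cat}. The template to imitate is $I=\kappa+1$ with $G(\alpha)=\min(|\alpha|,\aleph_0)$: a $\kappa$-filtered colimit $\colim_{n<\omega}G_n$ of finite colimits of representables which is not a retract of any $G_n$, the failure being detected at the single object $\omega$ where $|G(\omega)|>|G_n(\omega)|$ for all $n$. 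In general I would enumerate the objects of $I_{/j_0}$ and build $G$ on $I$ by transfinite recursion, adjoining the representables $\Hom_I(x_\alpha,-)$ one at a time but amalgamating them along the chosen morphisms into $j_0$, precisely so as to keep $G$ pointwise $\kappa$-small, and then verify that $G$ is not a retract of any $\kappa$-small stage; it may help to reduce first to the case of a category with a terminal object, noting that restriction along $I_{/j_0}\to I$ preserves $\kappa$-presentable objects (its right adjoint is computed pointwise as a $\kappa$-small product, using local $\kappa$-smallness). The simultaneous control of pointwise size and of non-$\kappa$-presentability is the delicate point, and carrying it out is where the real work of the lemma lies.
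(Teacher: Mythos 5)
Your preliminary reductions are sound: local $\kappa$-smallness via \cref{Lemma:I_loc_small}, essential smallness of $I$ because the representables $\Hom_I(x,-)$ are (by \ref{main_theorem_loc_pres:acc_cat} applied to $\set$) $\kappa$-presentable objects of the locally $\kappa$-presentable category $\set^I$, and your first case is a correct and rather pleasant argument ($\colim_I$ preserves $\kappa$-presentable objects since $\Delta$ preserves $\kappa$-filtered colimits, and $\colim_I\coprod_x\Hom_I(x,-)\cong\mathrm{ob}(I)$). But the proof is not complete: all of the remaining content sits in your second case, where some $j_0$ receives morphisms from $\kappa$-many distinct objects, and there you offer only a template and a programme, conceding yourself that ``carrying it out is where the real work of the lemma lies.'' That is a genuine gap, not a routine verification. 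Moreover the template does not establish what it would need to: for uncountable $\kappa$ an $\omega$-indexed chain is not $\kappa$-filtered, so writing $G=\colim_{n<\omega}G_n$ with $G$ not a retract of any $G_n$ says nothing about failure of $\kappa$-presentability (and for $\kappa=\omega$ your $G$ is not even pointwise finite at the top object); the proposed transfinite recursion amalgamating representables over $I_{/j_0}$ while simultaneously keeping $G$ pointwise $\kappa$-small and provably non-$\kappa$-presentable is precisely the delicate construction that is missing.

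For comparison, the paper's proof never needs your dichotomy, and in particular never confronts the hard case of an object with a large inbound slice. It applies the hypothesis not just to $A=\set$ but, via condition (4)$\Rightarrow$(1) of \cref{prop:accessible_ends} with $A=\set^{I^{\op}}$, to conclude that $\Hom\colon I^{\op}\times I\to\set$ is a $\kappa$-presentable object of $\set^{I^{\op}\times I}$. A general fact about presheaf categories (a $\kappa$-presentable object is generated under functoriality by a $\kappa$-small family of elements, because such families give a $\kappa$-filtered system of subobjects) then produces a $\kappa$-small set of arrows $f_x$ through which every arrow of $I$ factors as $u f_x v$; factoring the identities shows every object of $I$ is a retract of one of $\kappa$-small many objects, so $I$ embeds fully faithfully in the Cauchy completion of a $\kappa$-small full subcategory and is therefore essentially $\kappa$-small. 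Note that this argument bounds objects and arrows in one stroke (retracts replace your counting of objects in a skeleton), which is exactly what lets it bypass the case you could not finish. If you want to salvage your approach, the most economical fix is to replace your case analysis by this $\Hom$-object generation argument; otherwise your Case 2 must actually be proved, and that is the lemma.
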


   \begin{proof} We have seen in \cref{Lemma:I_loc_small} that $I$ is locally $\kappa$-small. $I$ also satisfies the last (hence all) condition of \cref{prop:accessible_ends}. Hence the functor $\Hom : I^\op \times I \to \set$ is $\kappa$-presentable. In general, given a $\kappa$-presentable object $X$ of a functor category $\set^K$, one can show that there is a $\kappa$-small family of elements $a_x \in X(k_x)$ such that every element of $X$ is the image of one of $a_x$ by the functoriality of $X$. Indeed each such family defines a subobject of $X$ and together they form a $\kappa$-filtered family of subobjects of $X$, so if $X$ is $\kappa$-presentable, then one of these subobjects is equal to $X$.

   In our case, it means that there exists a $\kappa$-small set of arrows $f_x:a_x \to b_x \in I$ for $x \in X$ such that every arrow $g$ of $I$ can be factored through one of these as $g =u f_x v$ for some $x \in X$. In particular, for each object $y \in I$, we have two arrows $u,v$ such that $Id_y = u f_x v$, which implies that $y$ is a retract of $a_x$ (as well as of $b_x$). The category $I$ being locally $\kappa$-small, the full subcategory of the $a_x$ is a $\kappa$-small category $A$ and we just showed that $I$ identifies with a full subcategory of the Cauchy completion of $A$, hence is an essentially $\kappa$-small category, as the Cauchy completion of a $\kappa$-small category can be constructed as a $\kappa$-small category. \end{proof}

 \subsection{ Proof of \ref{main_theorem_loc_pres:small} $\Rightarrow$  \ref{main_theorem_loc_pres:acc_cat} }

 We fix $A$ a locally $\kappa$-presentable category and $I$ a $\kappa$-small category. We will show condition \ref{main_theorem_loc_pres:acc_cat}, i.e. that $A^I$ is also locally $\kappa$-presentable with its $\kappa$-presentable objects being the functors taking values in the full subcategory $A_\kappa$ of $\kappa$-presentable objects of $A$. 
 Note that by \cref{prop:accessible_ends}, as $I$ is $\kappa$-small, the functors $I \to A_\kappa$ are indeed $\kappa$-presentable objects of $A^I$.

 The evaluation functor $ev_i: A^I \to A$ (for $i \in I$) have left adjoints $F_i : A \to A^I$ than can be expressed as
 \[F_i (X) \coloneqq \left( j \mapsto \coprod_{\Hom_I(i,j)} X \right) \in A^I.\]

 In particular, as the category $I$ is $\kappa$-small this coproduct is $\kappa$-small and hence if $X \in A_\kappa$, then $F_i(X) \in (A_\kappa)^I$. We have that for any $U \in A^I$, $\Hom(F_i(X),U) = \Hom(X,ev_i(U))$, so it follows that an arrow $f: U \to V$ in $A^I$ is an isomorphism if and only if for each $X \in A_\kappa$ and each $i \in I$ we have that
 \[\Hom(F_i(X),U) \to \Hom(F_i(X),V)\]
 is an equivalence. The following lemma, applied to the cocomplete category $\Acal^I$ and to $\Ccal = (\Acal_\kappa)^I$  then concludes the proof:

 \begin{lemma}
   Let $\Acal$ be a cocomplete category and let $\Ccal \subset \Acal$ be a full subcategory of $\Acal$ such that:

   \begin{enumerate}
   \item $\Ccal$ is closed under $\kappa$-small colimits in $\Acal$.
    \item Every object of $\Ccal$ is $\kappa$-presentable in $\Acal$.
    \item For any arrow $f :U \to V$, if for all $c \in \Ccal, \Hom(c,f):\Hom(c,U) \to \Hom(c,V)$ is a bijection, then $f$ is an isomorphism.
    \end{enumerate}

    Then, $\Acal$ is locally $\kappa$-presentable and up to equivalence, $\Ccal$ is the category of $\kappa$-presentable objects of $\Acal$.
 \end{lemma}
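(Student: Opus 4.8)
The plan is to exhibit $\Ccal$ as a small dense generator of $\Acal$ and then invoke the standard recognition theorem for locally $\kappa$-presentable categories (e.g. \cite[Theorem 1.20]{adamek1994locally}): a cocomplete category is locally $\kappa$-presentable iff it has a strong generator consisting of $\kappa$-presentable objects, and in that case the $\kappa$-presentable objects are exactly the retracts of $\kappa$-small colimits of the generators. First I would check that $\Ccal$, being closed under $\kappa$-small colimits, is essentially small: this needs a size remark, but in the intended application $\Ccal = (\Acal_\kappa)^I$ with $I$ $\kappa$-small and $\Acal_\kappa$ essentially small, so one may either add essential smallness of $\Ccal$ as a (harmless) hypothesis or derive it from the ambient setup; I would phrase the lemma so that this is covered. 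Condition (3) says precisely that $\Ccal$ is a strong (even dense-enough) generator, and condition (2) says its objects are $\kappa$-presentable, so the recognition theorem applies and gives that $\Acal$ is locally $\kappa$-presentable.

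Next I would identify the $\kappa$-presentable objects. By the recognition theorem, $(\Acal)_\kappa$ is the closure of $\Ccal$ under $\kappa$-small colimits and retracts in $\Acal$. By condition (1), $\Ccal$ is already closed under $\kappa$-small colimits, so it remains to see it is closed under retracts in $\Acal$ — or, more precisely, that every retract in $\Acal$ of an object of $\Ccal$ already lies in $\Ccal$ up to isomorphism. Here I would use condition (3) together with the idempotent-splitting description: given an idempotent $e$ on $c \in \Ccal$, form its splitting $r$ inside $\Acal$; since $\Ccal$ is closed under $\kappa$-small colimits (in particular under splitting of idempotents, which is a $\kappa$-small — indeed finite — colimit), the retract can be taken inside $\Ccal$. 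Thus $(\Acal)_\kappa \subseteq \Ccal$. The reverse inclusion $\Ccal \subseteq (\Acal)_\kappa$ is condition (2). Finally, fullness of $\Ccal$ in $\Acal$ gives that this is an equivalence of categories, not merely a bijection on objects.

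The one genuine subtlety — and what I expect to be the main obstacle — is the set-theoretic bookkeeping: the recognition theorem requires a \emph{set} of generators, so one must know $\Ccal$ is essentially small. Closure under $\kappa$-small colimits alone does not force this (a proper-class-sized full subcategory can be closed under such colimits), so the cleanest fix is to note that in every application $\Ccal$ arises as a functor category $(\Acal_\kappa)^I$ with $\Acal_\kappa$ essentially small and $I$ small, hence $\Ccal$ is essentially small; I would either fold ``$\Ccal$ essentially small'' into the statement or remark that it follows in the cases of interest. Everything else is a direct citation of the recognition theorem plus the elementary observation that idempotents split inside any subcategory closed under finite colimits. I would present the argument in that order: essential smallness, then apply the recognition theorem for local $\kappa$-presentability, then pin down $(\Acal)_\kappa$ via closure under $\kappa$-small colimits and retracts, then upgrade the object-level identification to an equivalence using fullness.
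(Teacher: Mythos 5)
Your route (cite the Adámek--Rosický recognition theorem) is genuinely different from the paper's, which gives a direct density argument: for each $X$ it forms the canonical colimit $Y=\colim_{c\to X,\, c\in\Ccal} c$, notes it is $\kappa$-filtered by closure of $\Ccal$ under $\kappa$-small colimits, uses $\kappa$-presentability of the objects of $\Ccal$ to see that $\Hom(d,Y)\to\Hom(d,X)$ is bijective, and invokes condition (3) to conclude $Y\cong X$; the identification of the $\kappa$-presentables then follows by the retract argument you also give. Your smallness caveat is legitimate, but it is equally implicit in the paper's proof (the canonical colimit must be a small colimit, and local presentability requires a small dense subcategory); in the intended application $\Ccal=(\Acal_\kappa)^I$ is essentially small, so this point is not where the two proofs differ.

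The genuine gap is the sentence ``Condition (3) says precisely that $\Ccal$ is a strong generator.'' Condition (3) says the functors $\Hom(c,-)$, $c\in\Ccal$, are \emph{jointly conservative}. The hypothesis of \cite[Theorem~1.20]{adamek1994locally} is a strong generator in their sense: a \emph{generator} (the $\Hom(c,-)$ are jointly faithful) which moreover detects proper subobjects. Joint conservativity gives the subobject half at once (a monomorphism through which every $c\to K$ factors has all $\Hom(c,-)$ bijective, hence is invertible), but joint faithfulness is not formal here: the standard deduction ``conservative $\Rightarrow$ faithful'' goes through equalizers, and $\Acal$ is only assumed cocomplete --- completeness is known only a posteriori, so using it would be circular. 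From the actual hypotheses, the natural way to obtain faithfulness is to show that every object is the canonical $\kappa$-filtered colimit of its comma category over $\Ccal$; but that is exactly the density computation that constitutes the paper's proof, so the citation does not actually bypass the work. To repair your argument, either prove density directly (after which your closure-under-retracts step, via splitting an idempotent as the coequalizer of $(e,\mathrm{id})$, and the upgrade to an equivalence by fullness are fine), or locate and cite a formulation of the recognition theorem whose hypothesis is literally ``a small jointly-conservative family of $\kappa$-presentable objects in a cocomplete category,'' rather than the generator-based one you invoke.
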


 \begin{proof}
   This is essentially the definition of a locally presentable categories, depending on the reference. We just briefly recall the argument: for any object $X \in \Acal$, we let
   \[ Y  = \colim_{c \to X \atop c \in \Ccal} c, \]
 as $\Ccal$ has all $\kappa$-small colimits, this is a $\kappa$-filtered colimit. As every object $d \in \Ccal$ is $\kappa$-presentable we have that $\displaystyle \Hom(d,Y) = \colim_{c \to X} \Hom(d,c) = \Hom(d,X)$, hence the last condition implies that the canonical map $Y \to X$ is an isomorphism. So, $\Ccal$ is a dense subcategory of $\kappa$-presentable objects, hence $\Acal$ is locally $\kappa$-presentable. Finally, if $X$ is a $\kappa$-presentable object then as $X$ is a $\kappa$-directed colimits of objects of $\Ccal$, then $X$ is a retract of an object in $\Ccal$, and as $\Ccal$ has all $\kappa$-small colimits, it is closed under retracts, so that $X$ is isomorphic to an object of $\Ccal$. 
 \end{proof}

\section{Proof of \cref{main_theorem_acc}.}

The equivalence between condition \ref{main_theorem_acc:ind_cat_cauchy} and condition \ref{main_theorem_acc:acc_cat} of \cref{main_theorem_acc} follows immediately from \cref{prop:Acc_vs_ind} and the remarks right after its proof. The implication \ref{main_theorem_acc:ind_cat} $\Rightarrow$ \ref{main_theorem_acc:ind_cat_cauchy} is tautological, so we only need to show $\ref{main_theorem_acc:ind_cat_cauchy} \Rightarrow \ref{main_theorem_acc:small}$ and $\ref{main_theorem_acc:small} \Rightarrow \ref{main_theorem_acc:ind_cat}$. But before this, we need to discuss the notion of well-founded categories which appear in condition \ref{main_theorem_acc:small}.

\subsection{Well-founded categories}

  The class $\Ord$ of all ordinal is seen as a (large) category with a single arrow from $\beta \to \gamma$ if $\gamma \leqslant \beta$. Any ordinal $\alpha$ is seen as the small full subcategory $\alpha \subset \Ord$ of all ordinals $\beta < \alpha$. 

We first need to introduce the following construction, which plays a central role both in the notion of well-founded categories and latter in the proof of \cref{main_theorem_acc}.
  
\begin{construction}\label{cstr:Ialpha}
  Given $I$ a category and $\alpha$ either an ordinal or the large category $\Ord$ of all ordinal, we denote by $I^{(\alpha)}$ the non-full subcategory of $I \times \alpha$ which contains all the object of $I \times \alpha$ and in which the morphisms are:

  \begin{enumerate}
  \item All arrows $(x,\beta) \to (y, \gamma)$ in $I \times \alpha$ if $\beta < \gamma$.
  \item Only the identity arrow $(x,\beta) \to (x,\beta)$.
  \end{enumerate}

  The projection $I \times \alpha \to I$ restrict to a functor $I^{(\alpha)} \to I$ which we call the canonical functor.
\end{construction}

It should be noted that the construction $I \mapsto I^{(\alpha)}$ is not functorial in the bicategorical sense, but only in a 1-categorical sense, as it explicitely involve the set of objects of $I$. This construction does not respect the ``equivalence principle'' in the sense that an equivalence of category $I \simeq J$ does not imply that $I^{(\alpha)} \simeq J^{(\alpha)}$.

\bigskip

A binary relation $R$ on a set $X$ is said to be well-founded if there is no infinite chain $x_1, \dots , x_n, \dots$ in $X$ such that $x_{n+1} R x_n$ for all $n$. Equivalently, if the only subset $S \subset X$ satisfying $(\forall y, y R x \Rightarrow y \in S) \Rightarrow x \in S$ is $S=X$. A poset is said to be well-founded if the relation $ < $ defined as $x \leqslant y$ and $x \neq y$ is well-founded. For example ordinals are well-founded as posets, and up to isomorphisms they are the unique well-founded totally ordered sets.

\bigskip

A functor $F : \Ccal \to \Dcal$ is said to be \emph{identity-reflecting} if for every arrow $f$, $F(f)$ is an identity arrow implies that $f$ is an identity arrow. Note that this notion also breaks the equivalence principle: a functor equivalent to an identity-reflecting functor doesn't have to be identity-reflecting.

\bigskip

The posetal reflection of a category $I$, is the universal poset with a functor from $I$. One start with the relation on the set of objects of $I$ defined by $x \leqslant y \coloneqq$ ``There exists an arrow $x \to y$'' which is transitive and reflective and then one quotient the set of objects by the equivalence relation $x \leqslant y$ and $y \leqslant x$ to make into a poset. 

\begin{lemma}\label{poset=no_endo} For a category $I$ the following conditions are equivalent:

  \begin{enumerate}
  \item\label{poset=no_endo:id_reflec} The functor from $I$ to its posetal reflection is identity-reflecting.
  \item\label{poset=no_endo:no_endo} The category $I$ has no non-identity isomorphisms or endomorphisms.
  \end{enumerate}
\end{lemma}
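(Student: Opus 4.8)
The plan is to prove both implications directly by unwinding the definition of the posetal reflection $P(I)$ and its universal functor $q\colon I \to P(I)$. Recall that $q$ sends an object $x$ to its equivalence class $[x]$ under the relation ``$x \leqslant y$ and $y \leqslant x$'', where $x \leqslant y$ means there exists an arrow $x \to y$ in $I$; and on morphisms $q$ sends $f\colon x \to y$ to the unique arrow $[x] \to [y]$ in the poset. The key elementary observation, used repeatedly, is that $q(f)$ is an identity arrow of $P(I)$ exactly when $[x] = [y]$, i.e. when there exist arrows $x \to y$ and $y \to x$ in $I$.

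For \ref{poset=no_endo:no_endo} $\Rightarrow$ \ref{poset=no_endo:id_reflec}: suppose $I$ has no non-identity endomorphisms or isomorphisms, and let $f\colon x \to y$ be an arrow with $q(f)$ an identity. By the observation there is some $g\colon y \to x$. Then $g\circ f\colon x \to x$ and $f \circ g\colon y \to y$ are endomorphisms, hence both are identities by hypothesis; so $f$ is an isomorphism, hence an identity (again by hypothesis, since an identity is in particular a trivial isomorphism — more precisely, $x = y$ because $f$ is then a non-identity isomorphism unless it is already $\id_x$). Thus $f$ is an identity arrow, and $q$ is identity-reflecting.

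For \ref{poset=no_endo:id_reflec} $\Rightarrow$ \ref{poset=no_endo:no_endo}: suppose $q$ is identity-reflecting. If $f\colon x \to x$ is any endomorphism, then $q(f)$ is an endomorphism of $[x]$ in a poset, hence the identity of $[x]$; by identity-reflection $f = \id_x$. If $f\colon x \to y$ is an isomorphism with inverse $g$, then the existence of $f$ and $g$ gives $[x] = [y]$, so $q(f)$ is again an identity arrow of $P(I)$, whence $f$ is an identity by identity-reflection (in particular $x = y$). So $I$ has no non-identity endomorphisms or isomorphisms.

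I do not expect any serious obstacle here; the only point requiring a little care is the bookkeeping around what ``identity arrow'' forces on the source and target — an identity-reflecting functor sends $f$ to an identity only when $f$ itself is $\id_x$ for some $x$, so in particular the source and target of $f$ coincide, and one should state the hypotheses in \ref{poset=no_endo:no_endo} as ``no non-identity isomorphisms and no non-identity endomorphisms'' with the understanding that a non-identity isomorphism between distinct objects is also excluded. Everything else is a direct unwinding of definitions.
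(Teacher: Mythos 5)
Your proof is correct and follows essentially the same route as the paper: the direction from identity-reflection is the observation that endomorphisms and isomorphisms become identities in the posetal reflection, and the converse uses that arrows $x\to y$ and $y\to x$ compose to endomorphisms, hence identities, forcing $f$ to be an (identity) isomorphism. The only cosmetic difference is that the paper phrases the converse via bijectivity of the reflection functor on objects, while you argue directly on the arrow $f$; the content is identical.
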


\begin{proof}
  Any isomorphism or endomorphism is sent to an identity in the posetal reflection of $I$, so the implication \ref{poset=no_endo:id_reflec} $\Rightarrow$ \ref{poset=no_endo:no_endo} is clear. We hence assume that $I$ has no non-identity endomorphisms or isomorphisms. Two objects $x,y$ of $I$ become identified in the posetal reflection of $I$ if and only if there are maps $f:x \to y$ and $g: y \to x$, but then the composite $f \circ g$ and $g \circ f$ are endomorphisms, hence identity, hence $f$ and $g$ are isomorphisms, and hence $x = y$. It follows that the map from $I$ to its posetal reflection is bijective on objects, and as $I$ has no non-identity endomorphisms this makes it identity-reflecting.
\end{proof}

\begin{prop}\label{prop:strict_wf} For a category $I$, the following conditions are equivalents:

  \begin{enumerate}
        \item[\namedlabel{prop:strict_wf:chains}{(SW1)}] There are no identity-reflecting functors $\omega^\op \to I$.
  \item[\namedlabel{prop:strict_wf:relation}{(SW2)}] The relation $x < y$ on objects of $I$ defined by ``there exists a non-identity arrow $x \to y$'' is well-founded.
    \item[\namedlabel{prop:strict_wf:no_endo}{(SW3)}] The category $I$ has no non-identity isomorphisms or endomorphisms and its posetal reflection is a well-founded poset. 
    \item[\namedlabel{prop:strict_wf:fct_to_ord}{(SW4)}] There is an identity-reflecting functor $\Ccal \to \Ord$.
   \item[\namedlabel{prop:strict_wf:retract}{(SW5)}] The canonical functor $I^{(\Ord)} \to I$ admits a section (up to equality)
  \end{enumerate}
  A category satisfying these conditions is said to be \emph{strictly well-founded}.
\end{prop}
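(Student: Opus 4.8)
The plan is to organise the proof around a ``core'' equivalence \ref{prop:strict_wf:relation} $\Leftrightarrow$ \ref{prop:strict_wf:no_endo} $\Leftrightarrow$ \ref{prop:strict_wf:fct_to_ord} $\Leftrightarrow$ \ref{prop:strict_wf:retract} --- all four being reformulations of the existence of an ordinal rank function on $I$ --- and then to attach \ref{prop:strict_wf:chains} at the end.

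First I would dispose of \ref{prop:strict_wf:fct_to_ord} $\Leftrightarrow$ \ref{prop:strict_wf:retract} by unwinding definitions. A functor $F\colon I\to\Ord$ assigns an ordinal $F(x)$ to each object and, being a functor into $\Ord$, is automatically (weakly) monotone along every arrow; it is identity-reflecting exactly when every non-identity arrow $x\to y$ has $F(x)\neq F(y)$. Likewise, a section (up to equality) of the canonical functor $I^{(\Ord)}\to I$ is forced to have the form $x\mapsto(x,\rho(x))$ for a function $\rho$ on objects, and functoriality of such a section is precisely the requirement that every non-identity arrow strictly change $\rho$. Hence \ref{prop:strict_wf:fct_to_ord} and \ref{prop:strict_wf:retract} are both equivalent to the existence of a map $\rho\colon\mathrm{Ob}(I)\to\Ord$ that is monotone along all arrows and strictly monotone along non-identity arrows. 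Next, \ref{prop:strict_wf:relation} $\Rightarrow$ \ref{prop:strict_wf:fct_to_ord}: if the relation $\prec$ (``$x\prec y$ iff there is a non-identity arrow $x\to y$'') is well-founded, define $\rho$ by well-founded recursion, $\rho(y)=\sup\{\rho(x)+1: x\prec y\}$, which is legitimate exactly because \ref{prop:strict_wf:relation} forbids infinite descending $\prec$-chains; a direct check yields the required strict monotonicity. For \ref{prop:strict_wf:fct_to_ord} $\Rightarrow$ \ref{prop:strict_wf:no_endo}: given such a $\rho$, a non-identity endomorphism, a non-identity isomorphism, or a strictly descending chain in the posetal reflection would each force an impossible configuration of ordinals (a value strictly below itself, or an infinite strictly decreasing sequence). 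Finally \ref{prop:strict_wf:no_endo} $\Rightarrow$ \ref{prop:strict_wf:relation}: from a chain of non-identity arrows $f_n\colon x_{n+1}\to x_n$ witnessing failure of \ref{prop:strict_wf:relation}, the classes $[x_n]$ form a weakly descending chain in the posetal reflection, hence stabilise; past the stabilisation point there is also an arrow $g\colon x_n\to x_{n+1}$, and since $gf_n$ and $f_ng$ are endomorphisms they are identities by \ref{prop:strict_wf:no_endo}, so $f_n$ is a non-identity isomorphism (or endomorphism), contradicting \ref{prop:strict_wf:no_endo}. This closes the core.

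It then remains to attach \ref{prop:strict_wf:chains}. The easy direction is \ref{prop:strict_wf:relation} $\Rightarrow$ \ref{prop:strict_wf:chains}: an identity-reflecting functor $G\colon\omega^\op\to I$ would send each generating arrow $c_{n+1}\to c_n$ to a non-identity arrow $G(c_{n+1})\to G(c_n)$, exhibiting an infinite descending $\prec$-chain and contradicting \ref{prop:strict_wf:relation} (alternatively, composing with the rank functor of \ref{prop:strict_wf:fct_to_ord} gives an infinite strictly descending chain of ordinals). The converse \ref{prop:strict_wf:chains} $\Rightarrow$ \ref{prop:strict_wf:relation}, argued contrapositively, is the step I expect to be the main obstacle: from a chain of non-identity arrows $f_n\colon x_{n+1}\to x_n$ one wants an identity-reflecting functor $\omega^\op\to I$, the natural candidate sending $c_n\mapsto x_n$ with structure maps the composites $\phi_{m,n}=f_n\circ\cdots\circ f_{m-1}$; the difficulty is that a composite $\phi_{m,n}$ may collapse to an identity (forcing $x_m=x_n$). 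When the chain visits infinitely many distinct objects this is resolved by passing to a subsequence of pairwise distinct objects, so that all the relevant composites have distinct source and target and are therefore non-identities. The hard case is when only finitely many objects recur, where one must work harder --- re-routing through the recurring morphisms, or extracting a suitable idempotent --- to still produce an identity-reflecting functor; this combinatorial case analysis, together with the exact reading of condition \ref{prop:strict_wf:chains} it relies on, is where I expect the real work to lie.
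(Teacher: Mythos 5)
Your core cycle is fine: the arguments for (SW2) $\Rightarrow$ (SW4) (rank by well-founded recursion), (SW4) $\Leftrightarrow$ (SW5) (unwinding what a section of $I^{(\Ord)}\to I$ is), (SW4) $\Rightarrow$ (SW3), and (SW3) $\Rightarrow$ (SW2) (stabilisation of the weakly descending chain in the posetal reflection, then the two composites are endomorphisms, hence identities, hence a non-identity isomorphism) are all correct, and they are essentially the paper's route, except that the paper closes its loop through (SW1) via (SW5) $\Rightarrow$ (SW1) while you close it directly with (SW3) $\Rightarrow$ (SW2). The genuine gap is exactly where you flagged it: you never prove (SW1) $\Rightarrow$ (SW2), handling only the case where the chain visits infinitely many distinct objects and deferring the finite-recurrence case. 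And this case cannot be repaired: with the paper's literal definition of ``identity-reflecting'' (a condition on \emph{all} arrows, hence on all composites), the implication is false. Take $I$ to be the walking isomorphism: two objects $a,b$ and a single non-identity isomorphism $u:a\to b$ with inverse $v$. The relation of (SW2) has the cycle $a<b<a$, so (SW2) fails (as do (SW3)--(SW5)); but any functor $\omega^{\op}\to I$ sending the generating arrows to non-identity arrows must alternate $u$ and $v$, and then every length-two composite is an identity, so there is no identity-reflecting functor $\omega^{\op}\to I$ and (SW1) holds. So, as literally stated, (SW1) is strictly weaker than the other four conditions, and no amount of re-routing or idempotent extraction will produce the required functor.

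The reason the paper does not meet this obstacle is that its proof of (SW1) $\Leftrightarrow$ (SW2) is the one-line claim that identity-reflecting functors $\omega^{\op}\to I$ ``are exactly the downward chains'' for the relation of (SW2). This silently replaces ``identity-reflecting'' by the weaker requirement that only the generating arrows $n+1\to n$ go to non-identity arrows; under that reading (SW1) is a verbatim restatement of (SW2), there is nothing to prove, and your ``hard case'' disappears. So your suspicion that the real issue lies in ``the exact reading of condition (SW1)'' is exactly right: either (SW1) should be reformulated as ``there is no infinite sequence of non-identity arrows $x_{n+1}\to x_n$'' (equivalently, no functor $\omega^{\op}\to I$ sending generating arrows to non-identities), in which case your proposal, together with your easy direction (SW2) $\Rightarrow$ (SW1), is complete, or else the proposition as printed is incorrect at (SW1). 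Note that only the literal (SW1) is affected; the rest of the paper uses well-foundedness through (SW3)--(SW5) (and \cref{prop:weak_fw}), so nothing downstream breaks.
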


\begin{proof}
  The equivalence between \ref{prop:strict_wf:chains} and \ref{prop:strict_wf:relation} is immediate as the functors mentioned in \ref{prop:strict_wf:chains} are exactly the downward chains for the relation mentioned in \ref{prop:strict_wf:relation}.

  \ref{prop:strict_wf:relation} $\Rightarrow$ \ref{prop:strict_wf:no_endo} : indeed, any isomorphisms or endomorphisms in $I$ would allow to obtain either a $x$ such that $x < x$ or $x,y$ such that $x <y $ and $y < x$ which is impossible in a well-founded relation, and if there are no isomorphisms or endomorphisms, then the posetal reflection is the set of objects with the relation of the point \ref{prop:strict_wf:relation}.

  \ref{prop:strict_wf:no_endo} $\Rightarrow$ \ref{prop:strict_wf:fct_to_ord} Every well-founded-poset admits a functor to $\Ord$ which is identity-reflecting ( e.g. defined by well-founded induction as $v(x) = \sup_{y<x} v(y)^+$) so the implication follows by \cref{poset=no_endo}.

  \ref{prop:strict_wf:fct_to_ord} $\Rightarrow$ \ref{prop:strict_wf:retract}. Given $F: I \to \Ord$ an identity-reflecting functor, then the functor $(Id,F) : I \to I \times \Ord$ is a section of the first projection and takes values in $I^{(\Ord)}$.

  \ref{prop:strict_wf:retract} $\Rightarrow$ \ref{prop:strict_wf:chains}: a section of $I^{(\Ord)} \to I$ is automatically identity reflecting, so the existence of such section implies that there is an identity reflecting functor $I \to \Ord$ which clearly contradicts the existence of an identity reflecting functor $\omega^\op \to I$ as there is no such functor $\omega^\op \to \Ord$.

\end{proof}

\begin{prop}\label{prop:weak_fw} For a category $I$, the following conditions are equivalents
  \begin{enumerate}
  \item[\namedlabel{prop:weak_fw:conservatif}{(W1)}] $I$ has no non-identity endomorphisms and it admits a conservative functor to $\Ord$.
  \item[\namedlabel{prop:weak_fw:no_endo}{(W2)}] $I$ has no non-identity endomorphisms and its posetal reflection is well-founded.
  \item[\namedlabel{prop:weak_fw:skeleton}{(W3)}] Every skeleton of $I$ is a strictly well-founded category.
  \item[\namedlabel{prop:weak_fw:equiv}{(W4)}] $I$ is equivalent to a strictly well-founded category.
  \item[\namedlabel{prop:weak_fw:section_iso}{(W5)}] The canonical functor $I^{(\alpha)} \to I$ admits a section up to natural isomorphisms.
  \item[\namedlabel{prop:weak_fw:section_retract}{(W6)}] The identity functor on $I$ is a retract of a functor that can be factored as a functor $I \to I^{(\alpha)}$ followed by the canonical functor  $I^{(\alpha)} \to I$.
  \end{enumerate}

A category satisfying these equivalent conditions will be said to be \emph{Well-founded}.
  
\end{prop}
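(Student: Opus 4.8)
The plan is to prove the six conditions equivalent by running the cycle
\[
  \ref{prop:weak_fw:conservatif}\Rightarrow\ref{prop:weak_fw:no_endo}\Rightarrow\ref{prop:weak_fw:skeleton}\Rightarrow\ref{prop:weak_fw:equiv}\Rightarrow\ref{prop:weak_fw:section_iso}\Rightarrow\ref{prop:weak_fw:section_retract}\Rightarrow\ref{prop:weak_fw:conservatif},
\]
reducing everything to the strict case \cref{prop:strict_wf} and to \cref{poset=no_endo} whenever possible. In \ref{prop:weak_fw:section_iso} and \ref{prop:weak_fw:section_retract} I read $\alpha$ as $\Ord$: a section of $I^{(\beta)}\to I$ for some ordinal $\beta$ composes with the inclusion $I^{(\beta)}\hookrightarrow I^{(\Ord)}$, which lies over $I$, so the two readings agree.

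The soft half of the cycle. For \ref{prop:weak_fw:conservatif}$\Rightarrow$\ref{prop:weak_fw:no_endo}: given a conservative $F\colon I\to\Ord$, an infinite descending chain $\dots<[x_2]<[x_1]<[x_0]$ in the posetal reflection of $I$ produces morphisms $x_{n+1}\to x_n$ whose images form a weakly descending chain of ordinals which must be strictly descending (were $F$ to identify two consecutive values, conservativity would force the corresponding morphism of $I$ to be an isomorphism, contradicting $[x_{n+1}]\neq[x_n]$); this is impossible, so the posetal reflection is well-founded, and the absence of non-identity endomorphisms is part of the hypothesis. For \ref{prop:weak_fw:no_endo}$\Rightarrow$\ref{prop:weak_fw:skeleton}: let $S\hookrightarrow I$ be an arbitrary skeleton; the inclusion is faithful, so $S$ inherits the absence of non-identity endomorphisms, and being skeletal it then has no non-identity isomorphisms either; the inclusion being an equivalence identifies the posetal reflection of $S$ with the well-founded one of $I$, so $S$ is strictly well-founded by \cref{prop:strict_wf}. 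The implications \ref{prop:weak_fw:skeleton}$\Rightarrow$\ref{prop:weak_fw:equiv} and \ref{prop:weak_fw:section_iso}$\Rightarrow$\ref{prop:weak_fw:section_retract} are immediate, since a category is equivalent to each of its skeleta and a section up to isomorphism of $I^{(\Ord)}\to I$ a fortiori makes $\id_I$ a retract of that composite.

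The hard half. For \ref{prop:weak_fw:equiv}$\Rightarrow$\ref{prop:weak_fw:section_iso}: write $I\simeq J$ with $J$ strictly well-founded and fix quasi-inverse equivalences $\Phi\colon I\to J$, $\Psi\colon J\to I$ with $\Psi\Phi\cong\id_I$. By \cref{prop:strict_wf} there is $t\colon J\to J^{(\Ord)}$ with $\pi_J\circ t=\id_J$ on the nose. Since $(-)^{(\Ord)}$ is $1$-functorial, $\Psi$ induces $\Psi^{(\Ord)}\colon J^{(\Ord)}\to I^{(\Ord)}$, $(y,\beta)\mapsto(\Psi y,\beta)$ (well defined because every non-identity morphism of $J^{(\Ord)}$ strictly increases the ordinal coordinate), with $\pi_I\circ\Psi^{(\Ord)}=\Psi\circ\pi_J$; then $s\coloneqq\Psi^{(\Ord)}\circ t\circ\Phi$ satisfies $\pi_I\circ s=\Psi\circ\Phi\cong\id_I$. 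For \ref{prop:weak_fw:section_retract}$\Rightarrow$\ref{prop:weak_fw:conservatif}: suppose $\id_I$ is a retract of $G\coloneqq\pi\circ s$ for some $s\colon I\to I^{(\Ord)}$, witnessed by $\eta\colon\id_I\Rightarrow G$ and $\epsilon\colon G\Rightarrow\id_I$ with $\epsilon\circ\eta=\id$. First, $I$ has no non-identity endomorphism: for $f\colon x\to x$, $s(f)$ is an endomorphism of $I^{(\Ord)}$, hence an identity by \cref{cstr:Ialpha}, so $G(f)=\id$; naturality of $\epsilon$ gives $f\circ\epsilon_x=\epsilon_x$, whence $f=f\circ\epsilon_x\circ\eta_x=\epsilon_x\circ\eta_x=\id_x$. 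Second, let $q\colon I^{(\Ord)}\to\Ord$ be the second projection; it is a functor and reflects isomorphisms (a morphism of $I^{(\Ord)}$ fixing the ordinal coordinate is an identity), and $I^{(\Ord)}$ has no non-identity isomorphisms, so if $F\coloneqq q\circ s$ sends $f\colon x\to y$ to an isomorphism then $s(f)$ is an identity, hence $G(f)=\id$ and $Gx=Gy$; the naturality squares of $\eta$ and $\epsilon$ at $f$ then read $\eta_x=\eta_y\circ f$ and $\epsilon_y=f\circ\epsilon_x$, so $g\coloneqq\epsilon_x\circ\eta_y$ satisfies $f\circ g=\epsilon_y\circ\eta_y=\id_y$ and $g\circ f=\epsilon_x\circ\eta_x=\id_x$. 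Thus $F\colon I\to\Ord$ is conservative, which together with the first point gives \ref{prop:weak_fw:conservatif}.

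I expect \ref{prop:weak_fw:section_retract}$\Rightarrow$\ref{prop:weak_fw:conservatif} to be the main obstacle: it is the one step that must manufacture genuinely new structure — a conservative functor to $\Ord$ — out of the bare retract datum, and it rests on a careful bookkeeping of which morphisms of $I^{(\Ord)}$ are identities or isomorphisms together with two naturality-square chases. (One should also keep track of the orientation conventions for $\Ord$ and $I^{(\Ord)}$ so that the second projection $q$ really is a functor.) The transport step \ref{prop:weak_fw:equiv}$\Rightarrow$\ref{prop:weak_fw:section_iso} is routine once the strictly $1$-categorical (non-bicategorical) functoriality of $(-)^{(\Ord)}$ is recorded.
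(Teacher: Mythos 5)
Your proof is correct and follows essentially the same route as the paper: the same cycle of implications, the same reduction to \cref{prop:strict_wf} via skeleta, the same use of the $1$-functoriality of $(-)^{(\Ord)}$ for \ref{prop:weak_fw:equiv}$\Rightarrow$\ref{prop:weak_fw:section_iso}, and the same projection-to-$\Ord$ argument for \ref{prop:weak_fw:section_retract}$\Rightarrow$\ref{prop:weak_fw:conservatif}, where your explicit naturality-square computations just unpack the paper's observation that an arrow which is a (compatible) retract of an identity is an isomorphism, resp.\ an identity. The only cosmetic differences are that you argue \ref{prop:weak_fw:conservatif}$\Rightarrow$\ref{prop:weak_fw:no_endo} directly with descending chains rather than factoring through the posetal reflection, and that you make explicit the reading $\alpha=\Ord$ and the orientation convention for $\Ord$, which the paper leaves implicit.
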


Condition \ref{prop:weak_fw:section_retract} may seem a little strange - the only reason it is here is because this characterization will be used in the next subsection to show the implication $\ref{main_theorem_acc:ind_cat_cauchy} \Rightarrow \ref{main_theorem_acc:small}$ of \cref{main_theorem_acc}.

\begin{proof}

  \ref{prop:weak_fw:conservatif} $\Rightarrow$ \ref{prop:weak_fw:no_endo}. Such a conservatif functor factors into a conservatif functor fron the posetal reflection of $I$ to $\Ord$, which implies that this posetal reflection has no infinite strictly decreasing chains, hence is well-founded.
  
  \ref{prop:weak_fw:no_endo} $\Rightarrow$ \ref{prop:weak_fw:skeleton}. This follows immediately from point \ref{prop:strict_wf:no_endo} of \cref{prop:strict_wf}: indeed in a skeleton all isomorphisms will be endomorphisms, and hence a skeleton of a category satisfying \ref{prop:weak_fw:no_endo}, will have non-identity endomorphisms and isomorphisms and a well-founded posetal reflection, so satisfy condition \ref{prop:strict_wf:no_endo} of \cref{prop:strict_wf}.
  
  \ref{prop:weak_fw:skeleton} $\Rightarrow$ \ref{prop:weak_fw:equiv} is tautological.

  \ref{prop:weak_fw:equiv} $\Rightarrow$ \ref{prop:weak_fw:section_iso}. The construction $I^{(\Ord)}$ is not a functor in the $2$-categorical or bicategorical sense, but it is functorial in the $1$-categorical sense nonetheless. So given an equivalence $F: A \to I$ with $A$ a strictly well-founded category, we get a commutative square:
\[  \begin{tikzcd}
    A^{(\Ord)} \ar[r,"F^{(\Ord)}"] \ar[d,"\pi_A"] & I^{(\Ord)}  \ar[d,"\pi_I"] \\
    A \ar[r,"F"] & I
  \end{tikzcd}\]

By point \ref{prop:strict_wf:retract} of \cref{prop:strict_wf}, the left functor $\pi_A$ has a section $s$ (up to equality) and the bottom functor $F$ is an equivalence (so it has an inverse up to ismophisms), so composing $F^{(\Ord)} s F^{-1}$ gives a functor $I \to I^{(\alpha)}$ such that if one post-compose it by $\pi_I$ we get $\pi_I F^{(\Ord)} s F^{-1} = F \pi_A s F^{-1} = F F^{-1} \simeq \id_I$ hence the result.

  \ref{prop:weak_fw:section_iso} $\Rightarrow$ \ref{prop:weak_fw:section_retract} is tautological.

  \ref{prop:weak_fw:section_retract} $\Rightarrow$  \ref{prop:weak_fw:conservatif}. We get a functor $F:I \to \Ord$ by composing the functor $I \to I^{(\Ord)}$ with the projection $I^{(\Ord)} \subset I \times \Ord \to \Ord$. Let $f$ be any arrow such that $F(f)$ is an identity. As the only arrows in $I^{(\Ord)}$ sent to identities in $\Ord$ are identities, it follows that the image of $f$ is already an identity arrow in $I^{\Ord}$, hence $f$ is a retract of an identity arrow in $I$, so it has to be an isomorphism. This proves that the functor to $\Ord$ is conservative. If we further assume that $f$ is an endomorphism of an object, then the same argument shows that $f$ is a retract of an identity, with the same retraction on each side, which forces $f$ to be an identity arrow, hence this concludes the proof.
\end{proof}

\subsection{Proof of \ref{main_theorem_acc:ind_cat_cauchy} $\Rightarrow$ \ref{main_theorem_acc:small} }
\label{subsec:acc_ind_implies_small}

We fix $I$ a category such that for all Cauchy complete category $\Ccal$, the functor $E^I_{\Ccal,\kappa} : \Ind_{\kappa}(\Ccal^I) \to \Ind_\kappa(\Ccal)^I$ is an equivalence. It is in particular an equivalence for all category $\Ccal$ having $\kappa$-small colimits, so by \cref{main_theorem_loc_pres} the category $I$ is $\kappa$-small. 

We then take $\Ccal = I^{(\kappa)}$. For each $x \in I$, we consider the object $E_x \in \Ind_\kappa$ defined as follows:

\[ E_x = \colim_{\alpha < \kappa} (x,\alpha)\]

As $\kappa$ is assumed to be a regular cardinal (which we consider as an ordinal here), the poset $\kappa$ has all $\kappa$-small join and hence is $\kappa$-directed. As a functor $\Ccal^\op \to \set$, $E_x$ can be described as:

\[ E_x( y,\alpha) = \Hom_I (y,x) \]

So this clearly constitutes a functor $E: I \to \Ind_\kappa(\Ccal)$. It should also be noted that the functor $\Ind_\kappa(\pi_I) : \Ind_\kappa(I^{(\alpha)}) \to \Ind_\kappa(I)$ sends the objects $E_x$ to the object $x$ itself as the all the $(x, \alpha)$ are sent to $x$ and hence the colimit defining $E_x$ becomes trivial in $\Ind_\kappa(I)$. So that the composite $\Ind_\kappa(\pi_I) \circ E : I \to \Ind_\kappa(I)$ identifies with the canonical functor $I \to \Ind_\kappa(I)$.

As we are assuming condition \ref{main_theorem_acc:ind_cat_cauchy} of \cref{main_theorem_acc} and the category $\Ccal = I^{(\kappa)}$ is Cauchy complete (it has no non-identity idempotent), we can hence find a $\kappa$-directed family of functors $F^j: I \to I^{(\kappa)}$ such that $E = \colim_j F^j$ in the category of functors $I \to \Ind_\kappa(I^{(\kappa)})$.

$\Ind_\kappa(\pi_I)$ preserves $\kappa$-filtered colimit, so we also have that

\[ \colim_j \Ind_\kappa(\pi_I) F^j \simeq \Ind_\kappa(\pi_I) E \]

Identify with the canonical functor $I \to \Ind_\kappa(I)$. Now, applying our assumption \ref{main_theorem_acc:ind_cat_cauchy} to (the Cauchy completion of) $I$, we see that this implies that the canonical functor $I \to \Ind_\kappa(I)$ is a $\kappa$-presentable object of the category of all such functor, and hence because of the previous colimit it has to be a retract of one of the functors  $\Ind_\kappa(\pi_I) F^j$, but then all the functors involved actually takes values in $I$ and hence we have shown that the identity of $I$ is a retract of $\pi_I \circ F^j$ for some $j$, which is exactly condition \ref{prop:weak_fw:section_retract} of \cref{prop:weak_fw}. Hence proving that $I$ is well-founded.

\subsection{Proof of \ref{main_theorem_acc:small} $\Rightarrow$ \ref{main_theorem_acc:ind_cat}}

We are now showing that if $I$ is well-founded and $\kappa$-small and $\Ccal$ is any category, then $E^I_{\Ccal,\kappa}: \Ind_\kappa(\Ccal^I) \to \Ind_\kappa(\Ccal)^I$ is an equivalence. The strategy here is to show first that, for $I$ a $\kappa$-small category and $\alpha < \kappa$ an ordinal, the functor
\[ E^{I^{(\alpha)}}_{\Ccal,\kappa}: \Ind_\kappa(\Ccal^{I^{(\alpha)}}) \to \Ind_\kappa(\Ccal)^{I^{(\alpha)}} \]
is an equivalence, which we achieve by induction on $\alpha$, and then we exploit that when $I$ is well-founded it is a retract of one of the $I^{(\alpha)}$ to conclude the proof.

We start with the following proposition:

\begin{prop}\label{prop:tower_limits} Let $\alpha < \kappa$ any $\kappa$-small ordinal. Let $\Ccal_\bullet: \alpha^\op \to \cat$ be a tower of categories with the property that for each $\gamma < \alpha$ the functor

  \[ \Ccal_\gamma \to \Lim_{\beta < \gamma } \Ccal_\beta \]

  is (equivalent to a) cartesian fibration. Then the limit $\Lim_{\beta < \alpha} \Ccal_\beta$ is preserved by $\Ind_\kappa$.  
  
\end{prop}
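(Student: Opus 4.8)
The plan is to show, by transfinite induction on $\alpha$, that the canonical comparison functor
\[ \Ind_\kappa\Big(\Lim_{\beta<\alpha}\Ccal_\beta\Big)\ \longrightarrow\ \Lim_{\beta<\alpha}\Ind_\kappa(\Ccal_\beta) \]
is an equivalence, treating full faithfulness and essential surjectivity separately. Full faithfulness holds for \emph{every} tower as soon as $\alpha<\kappa$, and is proved just like \cref{prop:E_fullyfaithful}: writing $X=\colim_i X_i$, $Y=\colim_j Y_j$ as $\kappa$-filtered colimits of objects of $\Lim_\beta\Ccal_\beta$, one gets $\Hom(X,Y)=\Lim_i\colim_j\Hom_{\Lim_\beta\Ccal_\beta}(X_i,Y_j)=\Lim_i\colim_j\Lim_{\beta<\alpha}\Hom_{\Ccal_\beta}((X_i)_\beta,(Y_j)_\beta)$, using that Hom-sets in a limit of categories are the corresponding limits of Hom-sets; the analogous computation in $\Lim_\beta\Ind_\kappa(\Ccal_\beta)$ (where $\kappa$-filtered colimits are computed stagewise and each $(X_i)_\beta\in\Ccal_\beta$ is $\kappa$-presentable in $\Ind_\kappa(\Ccal_\beta)$) gives the same triple expression with $\Lim_{\beta<\alpha}$ pulled outward, and the two agree because $\Lim_i$ commutes with $\Lim_{\beta<\alpha}$ and, crucially, because in $\set$ a $\kappa$-filtered colimit commutes with an $\alpha$-indexed limit whenever $\alpha<\kappa$.

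For the induction, the base case $\alpha=0$ (limit over the empty diagram, the terminal category) and the successor case $\alpha=\gamma+1$ (where $\Lim_{\beta<\gamma+1}\Ccal_\beta\simeq\Ccal_\gamma$, since $(\gamma+1)^{\op}$ has an initial object, and $\Ind_\kappa$ passes through) are immediate, so all the content is at limit ordinals $\alpha$. There I would first replace the tower by the tower of partial limits $\Lcal_\gamma:=\Lim_{\beta<\gamma}\Ccal_\beta$ for $\gamma\leqslant\alpha$: a Fubini/cofinality computation shows it is \emph{continuous} ($\Lcal_\gamma\simeq\Lim_{\beta<\gamma}\Lcal_\beta$ at limit $\gamma$), all its transition functors are cartesian fibrations (the successor ones because $\Lcal_{\gamma+1}\simeq\Ccal_\gamma$ and $\Ccal_\gamma\to\Lim_{\beta<\gamma}\Ccal_\beta$ is assumed to be one, the transfinite ones because cartesian fibrations are stable under composition and under limits of continuous towers — lift a cartesian arrow one stage at a time), and, since the successor ordinals are cofinal in $\alpha$ and $\Lcal_{\beta+1}\simeq\Ccal_\beta$, one has $\Lim_{\beta<\alpha}\Ind_\kappa(\Ccal_\beta)\simeq\Lim_{\gamma<\alpha}\Ind_\kappa(\Lcal_\gamma)$. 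It thus suffices to prove essential surjectivity of $\Ind_\kappa(\Lim_{\gamma<\alpha}\Lcal_\gamma)\to\Lim_{\gamma<\alpha}\Ind_\kappa(\Lcal_\gamma)$.

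The heart of the argument is then a transfinite lifting of presentations. Given a compatible family $(\xi_\gamma)_{\gamma<\alpha}$ with $\xi_\gamma\in\Ind_\kappa(\Lcal_\gamma)$, I would build a single $\kappa$-filtered diagram $D:\Jcal\to\Lim_{\gamma<\alpha}\Lcal_\gamma$ whose colimit has image $\xi_\gamma$ at every stage, by recursion on $\gamma$: start from a $\kappa$-filtered presentation of $\xi_1$ by objects of $\Lcal_1\simeq\Ccal_0$; at a successor step use that, for a cartesian fibration $\Lcal_{\gamma+1}\to\Lcal_\gamma$, the functor $\Ind_\kappa(\Lcal_{\gamma+1})\to\Ind_\kappa(\Lcal_\gamma)$ is again a cartesian fibration whose fibre over a representable is the $\Ind_\kappa$ of the corresponding fibre, so $\xi_{\gamma+1}$ restricts, over the diagram built so far, to a fibrewise family of $\Ind_\kappa$-objects that one presents in order to enlarge the diagram; at a limit step $\gamma<\alpha$ take the (continuous) limit of the diagrams assembled so far, and invoke the full faithfulness above, now at length $\gamma<\kappa$, to identify its colimit with the given $\xi_\gamma$. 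Passing to the colimit of the final diagram over $\Lim_{\gamma<\alpha}\Lcal_\gamma$ produces the required preimage of $(\xi_\gamma)_\gamma$.

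I expect the genuine difficulty to be showing that the diagram produced by this recursion stays $\kappa$-filtered (equivalently, that the projections $\Jcal\to\Jcal_\gamma$ remain cofinal). Here $\alpha<\kappa$ is essential — only fewer than $\kappa$ stages are traversed, so only $<\kappa$ choices are made and all intervening limits have length $<\kappa$ — as is the $\kappa$-filteredness of the fibres of the cartesian fibrations $\Lcal_{\gamma+1}\to\Lcal_\gamma$. The subtlety is that cartesian fibrations let one pull back, not push forward, along base morphisms, so the naive iterated Grothendieck construction of the index category need not be $\kappa$-filtered; I would handle this by controlling instead the projection $\Lim_{\gamma<\alpha}\Lcal_\gamma\to\Lcal_1$, which is itself a cartesian fibration whose fibres are limits of towers of cartesian fibrations of length strictly below $\alpha$, so that the induction hypothesis applies to them.
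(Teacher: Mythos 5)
Your full-faithfulness part is fine and matches the paper's (it is the argument of \cref{prop:E_fullyfaithful}, using that Hom-sets in the limit are $\alpha$-indexed limits of Hom-sets and that $\kappa$-filtered colimits in $\set$ commute with $\kappa$-small limits), and the reduction to limit ordinals via the continuous tower of partial limits $\Lcal_\gamma$ is plausible. But the essential surjectivity step, which is where all the content of \cref{prop:tower_limits} lies, is not actually proved: your recursion "enlarge the diagram fibrewise at successor stages, take the limit of the diagrams at limit stages" is exactly the construction whose feasibility is in question, and you yourself flag the two places where it can break without closing them. First, the claim that for a cartesian fibration $\Lcal_{\gamma+1}\to\Lcal_\gamma$ the induced functor $\Ind_\kappa(\Lcal_{\gamma+1})\to\Ind_\kappa(\Lcal_\gamma)$ is again a cartesian fibration whose fibre over a representable is the $\Ind_\kappa$ of the fibre is unproven, and it is dubious as stated: an object of $\Ind_\kappa(\Lcal_{\gamma+1})$ lying over a representable $b$ is a colimit of objects $e_i$ whose images only map \emph{to} $b$, and cartesian lifts let you pull back along maps into $p(e_i)$, not push the $e_i$ forward into the fibre over $b$; so identifying that fibre with $\Ind_\kappa$ of the strict fibre needs an argument you do not give (and retracts in a non-Cauchy-complete base already make it delicate). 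Second, and more seriously, the $\kappa$-filteredness of the index category $\Jcal$ you build (equivalently, cofinality of the projections, and the behaviour of the construction at limit stages, where inverse limits of filtered categories need not be filtered) is precisely the "genuine difficulty" you name, and the proposed remedy — controlling the projection to $\Lcal_1$ and applying the induction hypothesis to its fibres — is a one-sentence hope, not an argument: it is not checked that those fibre towers satisfy the hypotheses of the proposition, nor how commutation of $\Ind_\kappa$ with the limit of the fibres would yield a $\kappa$-filtered presentation of the given compatible family.

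The paper's proof avoids this difficulty by never constructing a bespoke presentation diagram: it works with the canonical comma category of maps $X\to Y$ with $X\in\Lim_{\gamma}\Ccal_\gamma$, proves that this category is $\kappa$-filtered, and then checks that $Y$ is its colimit. The cocone construction for a $\kappa$-small diagram $X^{(i)}\to Y$ is done by induction on $\gamma$: one first finds $E^0_\gamma\in\Ccal_\gamma$ factoring the level-$\gamma$ cocone (possible since $Y_\gamma\in\Ind_\kappa(\Ccal_\gamma)$), enlarges it so that its image in $\Lim_{\beta<\gamma}\Ind_\kappa(\Ccal_\beta)$ also receives the already-built cocone vertex $E$ compatibly (using that the projection preserves $\kappa$-filtered colimits and that $E$ is $\kappa$-presentable), and then defines $E_\gamma$ as a cartesian lift of $E\to\pi(E^0_\gamma)$; the statement that $Y$ is the colimit is checked levelwise by the same factorization/equalization device. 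If you want to salvage your approach, you essentially have to reprove this cocone-extension lemma anyway, so I would recommend switching to the canonical-diagram formulation rather than trying to make the transfinite lifting of presentations precise.
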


\begin{remark}
Here by limits, we mean pseudo-limits. As the $\Ind_\kappa$ functor is only well defined up to equivalence, asking for the preservation of strict limits does not really make sense. Because of this, it does not make sense either to ask the comparison functors in the proposition to be Grothendieck cartesian fibration in the strict sense, as they are only well defined up to equivalences of categories. This is why we only require that they are equivalent to cartesian fibration (equivalently, are Street fibrations). Of course, one could take all limits to be strict limits, and then one could ask these functors to be Grothendieck fibrations. As Grothendieck fibrations are in particular isofibrations, these strict limits would be equivalent to the corresponding pseudo-limits. The $\Ind_\kappa$ functor would then preserves the strict limit up to equivalences of categories. 
\end{remark}

\begin{proof}
  We fix $\alpha$ a $\kappa$-small ordinal and

  \[ \Ccal_0 \leftarrow \Ccal_1 \leftarrow \dots \leftarrow \Ccal_\gamma \leftarrow \dots \]
 a sequence of categories indexed by $\alpha^\op$, whose transition maps are cartesians fibrations. We need to show that the inclusion
  \[ \Lim_{\gamma \in \alpha^\op} \Ccal_\gamma \subset  \Lim_{\gamma \in \alpha^\op} \Ind_\kappa(\Ccal_\gamma) \]
  identifies the right-hand side with the $\Ind_\kappa$ completion of the left-hand side. The proof has three parts: first oen shows that the objects of $\Lim_{\gamma \in \alpha^\op} \Ccal_\gamma$ are $\kappa$-presentable in the right hand side, mostly using the same sort of argument as in \cref{prop:accessible_ends}, the second step is to show that the functor
  \[ E: \Ind_\kappa \Lim_{\gamma \in \alpha^\op} \Ccal_\gamma \to  \Lim_{\gamma \in \alpha^\op} \Ind_\kappa(\Ccal_\gamma) \]
  is fully faithful, using the exact same argument as in \cref{prop:E_fullyfaithful}, and finally the third step is to show that this functor is essentially surjective, that is that every object of $\displaystyle \Lim_{\gamma \in \alpha^\op} \Ind_\kappa(\Ccal_\gamma)$  is a $\kappa$-filtered colimits of objects of $\displaystyle \Lim_{\gamma \in \alpha^\op} \Ccal_\gamma$. Here the argument is to show that for all $Y$ in the limits, the diagram of all the $X \to Y$ with $\displaystyle X \in \Lim_{\gamma \in \alpha^\op} \Ccal_\gamma$ is $\kappa$-filtered and has colimit $Y$.

For the first part, we observe that in the limit $\Lim_{\gamma \in \alpha^\op} \Ind_\kappa(\Ccal_\gamma)$, all the transition functors preserve $\kappa$-filtered colimits, so all $\kappa$-filtered colimits are computed componentwise.  The Hom set in the limits can be written as a $\kappa$-small limit 
  \[ \Hom(X,Y) = \Lim_{\gamma \in \alpha^\op} \Hom(X_\gamma,Y_\gamma). \]
So, if for all $\gamma$, the objects $X_\gamma$ is in $\Ccal_\gamma$, and hence $\kappa$-presentable, then each individual Hom functor preserves $\kappa$-filtered colimits in the second variable, and the limits being $\kappa$-small, it comutes to $\kappa$-filtered colimits, hence $\Hom(X,\_)$ preserves $\kappa$-filtered colimits. So that $(X_\gamma) \in \Lim_{\gamma \in \alpha^\op} \Ind_\kappa(\Ccal_\gamma)$ is $\kappa$-presentable.

For the second part, we can just run the exact same argument as in \cref{prop:E_fullyfaithful}. The functor $E$ preserves $\kappa$-filtered colimits by construction, and so we can do the exact same computation as in the proof of \cref{prop:E_fullyfaithful} to conclude that the functor $E$ is fully faithful.

Moving to the third part, we show that for any
\[ Y = (Y_\gamma)_{\gamma \in \alpha^\op} \in \Lim_{\gamma \in \alpha^\op} \Ind_\kappa(\Ccal_\gamma) \]
the category of $X \to Y$ with $X_\gamma \in \Ccal_\gamma$ is $\kappa$-filtered. So let $X^{(i)}$ be a $\kappa$-small diagram of such objects. We construct a cocone for it, that is a factorization $X^{(i)} \to E \to Y$ where all $E_\gamma \in \Ccal_\gamma$ and the first arrow is natural in $i$. This is done by induction on $\gamma$. Indeed assuming such an $E_\beta$ has been constructed for all $\beta < \gamma$, that is we have our (natural) factorization $X^{(i)} \to E \to Y$ in the category $\Lim_{\beta < \gamma } \Ccal_\beta$. First, as $Y_\gamma \in \Ind_\kappa(\Ccal_\gamma)$, exists an object $E^0_\gamma \in \Ccal_\gamma$ that factors the cocone $X_\gamma^{(i)} \to E^0_\gamma \to Y_\gamma$. The functor $\pi: \Ind_\kappa \Ccal_\gamma \to \Lim_{\beta < \gamma } \Ind_\kappa \Ccal_\beta$ preserves $\kappa$-filtered colimits, so we can further ``enlarge'' $E^0_\gamma$ so that its image $\pi(E^0_\gamma)$ in this limit also factors the already existing map

  \[ X^{(i)} \to E \to \pi(E^0_\gamma) \to Y \]

  while making sure that the composite $X^{(i)} \to E \to \pi(E^0_\gamma)$ identifies with the image under $\pi$ of the the cocone structure $X_\gamma^{(i)} \to E^0_\gamma$. Finally, we construct $E_\gamma$ as a cartesian lift of $E \to \pi(E^0_\gamma)$ to a map $E_\gamma \to E^0_\gamma$, and easily check that $E_\gamma$ has all the properties needed to extend $E$.

Finally, we show that any $Y \in \Lim_{\gamma \in \alpha^\op} \Ind_\kappa(\Ccal_\gamma)$ is indeed the colimits of this $\kappa$-filtered diagram. $\kappa$-filtered colimits being computed componentwise it is enough to check that for each $ V \in \Ccal_\gamma$ and any maps $V \to Y_\gamma$, the map can be factored as $V \to X_\gamma \to Y_\gamma$ where $X \to Y$ is a map in the limits with $X \in \Lim_{\gamma \in \alpha^\op} \Ccal_\gamma$, and that given two such factorizations, they can be equalized by some larger $X' \to Y$. This can be achieved by exactly the same construction as above, by just adding one step: when constructing $E^0_\gamma$, one can make it so that (depending on the case) either the map $X_\gamma \to Y_\gamma$ factors through $E^0_\gamma \to Y_\gamma$ or that the two maps $V \rightrightarrows X_\gamma$ are equalized by $E^0_\gamma$, and then proceed with constructing $E^0_\gamma \to E_\gamma$ in the same way. And this concludes the proof.  
\end{proof}

\begin{lemma}\label{lem:sieve_restr_are_fib}
  Let $C$ be any category and $A \subset B$ be a sieve inclusion. That is $A$ is a full subcategory of $B$ such that for $f:b \to a$ with $a \in A$ we have $b \in B$. Then restriction functor $C^B \to C^A$ is a cartesian fibration. 
\end{lemma}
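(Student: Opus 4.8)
The plan is to construct cartesian lifts explicitly. Write $\rho\colon \Ccal^B \to \Ccal^A$ for the restriction functor. Fix an object $G \in \Ccal^B$ and a morphism with codomain $\rho(G)$, i.e.\ a natural transformation $\eta\colon F \Rightarrow G|_A$ with $F\colon A \to \Ccal$. I will produce a functor $\widetilde F\colon B \to \Ccal$ together with a natural transformation $\widetilde\eta\colon \widetilde F \Rightarrow G$ satisfying $\widetilde F|_A = F$ and $\widetilde\eta|_A = \eta$, and then check that $\widetilde\eta$ is cartesian over $\eta$; this exhibits $\rho$ as a (Grothendieck, in fact split) fibration.

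Construction: on objects put $\widetilde F(b) = F(b)$ if $b \in A$ and $\widetilde F(b) = G(b)$ otherwise, and set $\widetilde\eta_b = \eta_b$ for $b \in A$, $\widetilde\eta_b = \id_{G(b)}$ otherwise. For an arrow $f\colon b \to b'$ of $B$, the defining property of a sieve excludes the configuration $b \notin A$, $b' \in A$, so only three cases remain, and I define $\widetilde F(f) = F(f)$ if $b, b' \in A$; $\widetilde F(f) = G(f)$ if $b, b' \notin A$; and $\widetilde F(f) = G(f)\circ\eta_b$ if $b \in A$, $b' \notin A$.

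Three routine verifications then finish the argument. (i) $\widetilde F$ is a functor: for composable arrows $b \xrightarrow{f} b' \xrightarrow{g} b''$ the sieve condition forces the pattern of membership of $(b,b',b'')$ in $A$ to be one of $(1,1,1)$, $(1,1,0)$, $(1,0,0)$, $(0,0,0)$ (writing $1$ for ``in $A$''), and in each case the equality $\widetilde F(gf) = \widetilde F(g)\,\widetilde F(f)$ reduces to functoriality of $F$ or of $G$, together with the naturality square of $\eta$ in the pattern $(1,1,0)$; identities are immediate. (ii) $\widetilde\eta$ is natural: the only square not already covered by naturality of $\eta$ or by $\widetilde F$ and $G$ agreeing is the one for $f\colon b \to b'$ with $b \in A$, $b' \notin A$, and it is exactly the defining equation $\widetilde F(f) = G(f)\circ\eta_b$. (iii) $\widetilde\eta$ is cartesian: given $H \in \Ccal^B$, $\theta\colon H \Rightarrow G$ and $\phi\colon H|_A \Rightarrow F$ with $\eta\circ\phi = \theta|_A$, set $\psi_b = \phi_b$ for $b \in A$ and $\psi_b = \theta_b$ otherwise; naturality of $\psi$ again reduces to naturality of $\phi$, of $\theta$, and to the hypothesis $\eta\circ\phi = \theta|_A$ in the mixed case; the relation $\widetilde\eta\circ\psi = \theta$ holds by inspection; and $\psi$ is the unique such factorization, since on $A$ it is determined by $\psi|_A = \phi$ while off $A$ the equation $\id_{G(b)}\circ\psi_b = \theta_b$ forces $\psi_b = \theta_b$.

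I do not expect any real obstacle here: the whole proof is an explicit case analysis, and all the content sits in the single observation that the sieve condition rules out arrows $b \to b'$ with $b\notin A$ and $b'\in A$, which is precisely what makes $\widetilde F$ well-defined and functorial. The same construction yields a cleavage that is split, and in particular it shows $\rho$ is a Street fibration in the sense needed for the transition functors in \cref{prop:tower_limits}.
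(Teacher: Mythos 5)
Your proof is correct and follows exactly the paper's (mostly omitted) argument: the cartesian lift over $\eta\colon F \Rightarrow G|_A$ is the functor agreeing with $F$ on $A$ and with $G$ outside $A$, with mixed arrows $G(f)\circ\eta_b$, the sieve condition ruling out the problematic direction. You have simply supplied the routine verifications (functoriality, naturality, cartesianness, uniqueness) that the paper leaves to the reader.
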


\begin{proof}
  We omit the details. The central observation is that given $F: B \to C$, $E:A \to C$, and $\lambda: E \to F|_B$ a cartesian lift of $\lambda$ is obtained by considering $F' : B \to C$ to be defined as
  \[ F'(b) = \left\lbrace
      \begin{array}{cc}
        E(b) & \text{ if $b \in A$.}\\
        F(b) & \text{Otherwise.}
        \end{array} \right.
    \]
with the functoriality of $F'$ being given by the functoriality of $E$ and $F$ respectively for the arrows whose source and target are either both in $A$ or both outside of $A$, for the arrows $f:a \to b$ with $a \in A$, and $b \notin A$, by
\[E(a) \overset{\lambda_a}{\to} F(a) \overset{F(f)}{\to} F(b) \]
and as $A$ is a sieve, there are no arrows going in the other direction.
\end{proof}

\begin{prop}\label{prop:Result_for_Ialpha} Let $\Ccal$ be any category, $I$ be a $\kappa$-small category and $\alpha< \kappa$ an ordinal then
  \[E^{I^{(\alpha)}}_{\Ccal, \kappa} : \Ind_\kappa\left(\Ccal^{I^{(\alpha)}}\right) \to \Ind_\kappa\left(\Ccal\right)^{I^{(\alpha)}} \]
  is an equivalence.
  
\end{prop}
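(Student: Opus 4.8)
The plan is to proceed by induction on the ordinal $\alpha < \kappa$, keeping $\Ccal$ and the $\kappa$-small category $I$ fixed but arbitrary throughout. Since $I$ is $\kappa$-small and $\alpha < \kappa$ with $\kappa$ regular, the category $I^{(\alpha)}$ is again $\kappa$-small, so $E^{I^{(\alpha)}}_{\Ccal,\kappa}$ is automatically fully faithful by \cref{prop:E_fullyfaithful}; only essential surjectivity is at stake, i.e.\ we must show that every functor $Y \colon I^{(\alpha)} \to \Ind_\kappa(\Ccal)$ is a $\kappa$-filtered colimit (necessarily computed pointwise) of functors valued in $\Ccal$. The cases $\alpha = 0$ ($I^{(0)}$ is empty, both sides are terminal) and $\alpha = 1$ ($I^{(1)}$ is the $\kappa$-small discrete category on $\operatorname{ob} I$, so one only needs that $\Ind_\kappa$ commutes with products of fewer than $\kappa$ categories --- itself an instance of the three-step argument below, using that a product of fewer than $\kappa$ many $\kappa$-filtered categories is $\kappa$-filtered) are immediate.

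For $\alpha$ a limit ordinal the level filtration presents $I^{(\alpha)}$ as the union, hence the filtered colimit in $\cat$, of the sieve inclusions $I^{(\gamma)} \hookrightarrow I^{(\alpha)}$ for $\gamma < \alpha$; consequently $\Ccal^{I^{(\alpha)}} = \Lim_{\gamma < \alpha} \Ccal^{I^{(\gamma)}}$ and, in the same way, $\Ind_\kappa(\Ccal)^{I^{(\alpha)}} = \Lim_{\gamma < \alpha}\Ind_\kappa(\Ccal)^{I^{(\gamma)}}$. By \cref{lem:sieve_restr_are_fib} the comparison functors $\Ccal^{I^{(\gamma)}} \to \Lim_{\beta < \gamma}\Ccal^{I^{(\beta)}}$ are restrictions along sieve inclusions (when $\gamma$ is a successor) or equivalences (when $\gamma$ is a limit), hence cartesian fibrations, so \cref{prop:tower_limits} applies and gives $\Ind_\kappa(\Ccal^{I^{(\alpha)}}) \simeq \Lim_{\gamma < \alpha}\Ind_\kappa(\Ccal^{I^{(\gamma)}})$. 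The induction hypothesis $\Ind_\kappa(\Ccal^{I^{(\gamma)}}) \simeq \Ind_\kappa(\Ccal)^{I^{(\gamma)}}$ for $\gamma < \alpha$ is compatible with the restriction functors of the tower, so it assembles to an equivalence between the two limits; composing the equivalences and checking that the result is the canonical comparison functor disposes of the limit case.

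The successor case $\alpha = \delta + 1$ is, I expect, the main obstacle, since \cref{prop:tower_limits} gives nothing here (a tower whose limit is its own last term yields only a tautology). I would argue directly. The inclusion $I^{(\delta)} \hookrightarrow I^{(\delta+1)}$ is a sieve inclusion whose complement is the top level $\{(x,\delta) : x \in \operatorname{ob} I\}$, a discrete family of fewer than $\kappa$ objects carrying only identity endomorphisms and receiving (but not emitting) non-identity arrows; by \cref{lem:sieve_restr_are_fib} the restriction $p \colon \Ccal^{I^{(\delta+1)}} \to \Ccal^{I^{(\delta)}}$ is a cartesian fibration, whose fibre over $F$ is the product over $x$ of the categories of cocones under the composites $F \circ u_x$, where $u_x$ is the ($\kappa$-small) diagram of objects of $I^{(\delta)}$ mapping to $(x,\delta)$; the same description holds with $\Ind_\kappa(\Ccal)$ in place of $\Ccal$, compatibly with the functors $E$. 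One then repeats the three steps of the proof of \cref{prop:tower_limits}: objects of $\Ccal^{I^{(\delta+1)}}$ are $\kappa$-presentable in $\Ind_\kappa(\Ccal)^{I^{(\delta+1)}}$ (the Hom being a $\kappa$-small limit over the twisted-arrow category of the $\kappa$-small category $I^{(\delta+1)}$, cf.\ \cref{prop:accessible_ends}); $E^{I^{(\delta+1)}}$ is fully faithful as already observed; and for essential surjectivity one shows, for $Y \colon I^{(\delta+1)} \to \Ind_\kappa(\Ccal)$, that the category of pairs $(X \in \Ccal^{I^{(\delta+1)}}, X \to Y)$ is $\kappa$-filtered with colimit $Y$. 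This last point is the technical core and is handled exactly as in part three of the proof of \cref{prop:tower_limits}, now with one extra level: given a $\kappa$-small diagram $X^{(i)} \to Y$ of such pairs, use the induction hypothesis to factor its restriction to $I^{(\delta)}$ through some $X' \in \Ccal^{I^{(\delta)}}$ equipped with $X' \to Y|_{I^{(\delta)}}$; then --- each $Y(x,\delta)$ being a $\kappa$-filtered colimit of objects of $\Ccal$ and each $u_x$ being $\kappa$-small --- enlarge $X'$ and choose, for every $x$, an object of $\Ccal$ together with a cocone under $X' \circ u_x$ factoring the one into $Y(x,\delta)$, and assemble these (together with a cartesian lift for the part over $I^{(\delta)}$) into the required $X \in \Ccal^{I^{(\delta+1)}}$ over $Y$; the analogous insertion of one further ``absorption'' step treats the factorization and equalization conditions that witness $\kappa$-filteredness and identify the colimit with $Y$. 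This completes the induction and the proof.
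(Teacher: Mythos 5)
Your proposal is correct and follows essentially the same route as the paper's own proof: induction on $\alpha$, with the discrete base case, the successor case handled by combining \cref{prop:E_fullyfaithful} with the observation that the slice of objects of $\Ccal^{I^{(\delta+1)}}$ over $Y$ is $\kappa$-filtered with colimit $Y$ (extending a cocone obtained from the induction hypothesis on $I^{(\delta)}$ by finding cocones of $\kappa$-small diagrams in $\Ccal/Y(x,\delta)$), and the limit case via \cref{lem:sieve_restr_are_fib} and \cref{prop:tower_limits}. The only differences are cosmetic (your ``enlarge $X'$'' step is unnecessary, and the top-level cocone must also absorb the components $X^{(i)}(x,\delta)$, as in the paper's formulation), so nothing of substance diverges.
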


\begin{proof}
  We proceed by induction on $\alpha$, that is we assume the result is true for all $\beta < \alpha$. In the case of $\alpha = 0$, the category $I^{(\alpha)}$ is the discrete category on the set $X$ of objects of $I$, which is in particular a $\kappa$-small set. It is then easy to check that in this case the map:

  \[E^{X}_{\Ccal, \kappa} : \Ind_\kappa\left(\Ccal^X\right) \to \Ind_\kappa\left(\Ccal\right)^{X} \]
 is an equivalence, which gives the case $\alpha=0$. 

 If $\alpha = \beta^+$ is a successor ordinal, we show that $E^{I^{(\alpha)}}_{\Ccal, \kappa}$ is an equivalence following a strategy similar to the proof of \cref{prop:tower_limits}. First one can apply \cref{prop:E_fullyfaithful} to show that it is fully faithful. So we only need to show that it is essentially surjective, that is that every object  $Y \in \Ind_\kappa\left(\Ccal\right)^{I^{(\alpha)}}$ is a $\kappa$-directed colimit of objects in $\Ccal^{I^{(\alpha)}}$. For this we will proceed in two steps: we first show that the slice $\Ccal^{I^{(\alpha)}}/Y$ is a $\kappa$-filtered category and then that $Y$ is its colimits. In both cases a key observation is that as the result is assumed to be true for $\beta$, both these claims are true when $\beta$ is replaced by $\alpha$.

 So we consider a $\kappa$-small diagram $X^i \to Y$ in $\Ccal^{I^{(\alpha)}}/Y$ and we will show it admits a cocone. First, by our induction hypothesis, the restriction to $I^{(\beta)}$ has a cocone $X^i|_{I^{(\beta)}} \to E \to Y|_{I^{(\beta)}}$. We only need to extend $E$ to the object of the form $(\alpha,i) \in I^{(\alpha)}$, endowed with maps $E(\alpha,i) \to Y(\alpha,i)$, and all the appropriate maps from the $E(\beta,i) \to E(\alpha,i)$ and maps $X^i(\alpha,i) \to E(\alpha,i)$ such that composites, for example $E(\beta,i) \to E(\alpha,i) \to Y(\alpha,i)$, are the correct maps. This can be summed up as the question of finding a cocone for a certain $\kappa$-small diagram in $\Ccal/Y(\alpha,i)$, hence we can build these objects as $Y(\alpha,i) \in \Ind_\kappa(\Ccal)$.

 Finally, similarly to the proof of \cref{prop:tower_limits}, in order to show that $Y$ is the colimits of $\Ccal^{I^{(\alpha)}}/Y$, it is enough to show that for all $\gamma \leqslant \alpha$ and for each arrow $V \to Y(\gamma,i)$ for $V \in \Ccal$, this arrow can be factored as $V \to X(\gamma,i) \to Y(\gamma,i)$ for $X \in \Ccal^I/Y$, and that any two such factorizations are equalized by some $X \to X'$ in $\Ccal^I/Y$. But this is easily done by the exact same argument: One first builds the restriction of $X$ to $I^{(\beta)}$ by the induction hypothesis and then we extend $X$ to $I^{(\alpha)}$ by finding certain cocones for $\kappa$-small diagrams in $\Ccal/Y(\alpha,i)$.








We now move to that last part of the proof: $\alpha$ is a limit ordinal, then $I^{(\alpha)}$ is the union of the $I^{(\beta)}$ for $\beta \subset \alpha$, which are all sieve in $I^{(\alpha)}$. Hence

  \[ \Ccal^{I^{(\alpha)}} = \Lim_{\beta < \alpha} \Ccal^{I^{(\beta)}} \]

  and \cref{lem:sieve_restr_are_fib} immediately implies that this limit satisfies the conditions of \cref{prop:tower_limits}, hence:
  \[ \Ind_\kappa( \Ccal^{I^{(\alpha)}})  \simeq \Lim_{\beta < \alpha} \Ind_\kappa(\Ccal^{I^{(\beta)}}) \]
  hence by our induction hypothesis, we obtain
  \[ \Ind_\kappa( \Ccal^{I^{(\alpha)}})  \simeq \Lim_{\beta < \alpha} \Ind_\kappa(\Ccal)^{I^{(\beta)}} \simeq \Ind_\kappa( \Ccal)^{I^{(\alpha)}}, \]
  which concludes the proof. \end{proof}

We can now prove the claimed implication:

\begin{prop}
  Let $I$ be an essentially $\kappa$-small well-founded category, and $\Ccal$ any category, then
  \[E^{I}_{\Ccal, \kappa} : \Ind_\kappa(\Ccal^{I}) \to \Ind_\kappa(\Ccal)^{I} \]
is an equivalence of categories.
  
\end{prop}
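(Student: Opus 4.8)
The plan is to deduce the general well-founded case from the case of the categories $I^{(\alpha)}$ with $\alpha < \kappa$, already settled in \cref{prop:Result_for_Ialpha}, by exhibiting $E^I_{\Ccal,\kappa}$ as a retract of the corresponding comparison functor for a well-chosen $I^{(\alpha)}$ and using that a retract of an equivalence is itself an equivalence.

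First I would replace $I$ by a skeleton $J$. Since $I$ is essentially $\kappa$-small, $J$ is genuinely $\kappa$-small, and by condition \ref{prop:weak_fw:skeleton} of \cref{prop:weak_fw} the category $J$ is strictly well-founded. The assignments $\Ccal^{(-)}$ and $\Ind_\kappa(\Ccal)^{(-)}$ send an equivalence of the exponent to an equivalence of categories, compatibly with the comparison functors, so $E^I_{\Ccal,\kappa}$ is an equivalence if and only if $E^J_{\Ccal,\kappa}$ is, and I may assume from now on that $I$ itself is $\kappa$-small and strictly well-founded.

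By \cref{prop:strict_wf} the canonical functor $\pi : I^{(\Ord)} \to I$ admits a section $t$; composing $t$ with the projection $I^{(\Ord)} \to \Ord$ gives a functor $I \to \Ord$ whose values are, by regularity of $\kappa$ together with $\kappa$-smallness of $I$, bounded by some ordinal $\alpha < \kappa$. Thus $t$ factors through the full subcategory $I^{(\alpha)} \subseteq I^{(\Ord)}$, giving a section $s : I \to I^{(\alpha)}$ of the canonical functor $\pi : I^{(\alpha)} \to I$. Precomposition along $\pi$ and $s$ now yields functors $\pi^* : \Ccal^{I} \to \Ccal^{I^{(\alpha)}}$ and $s^* : \Ccal^{I^{(\alpha)}} \to \Ccal^{I}$ with $s^* \pi^* = \id$, and similarly functors between $\Ind_\kappa(\Ccal)^{I}$ and $\Ind_\kappa(\Ccal)^{I^{(\alpha)}}$ composing to the identity. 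Applying the $2$-functor $\Ind_\kappa$ to the first pair exhibits $\Ind_\kappa(\Ccal^{I})$ as a retract, up to equivalence, of $\Ind_\kappa(\Ccal^{I^{(\alpha)}})$. Moreover $E_{\Ccal,\kappa}^{(-)}$ is pseudo-natural in the exponent variable, since for a functor $u : K \to L$ one has $ev_k \circ u^* = ev_{u(k)}$, whence $E^{K}_{\Ccal,\kappa} \circ \Ind_\kappa(u^*) \cong u^* \circ E^{L}_{\Ccal,\kappa}$; instantiating this at $u = \pi$ and $u = s$ shows that the source-side and target-side retraction diagrams are compatible with $E^{I}_{\Ccal,\kappa}$ and $E^{I^{(\alpha)}}_{\Ccal,\kappa}$.

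It then remains to run the formal argument that a retract of an equivalence is an equivalence. Writing $i, r$ for the legs of the retraction of $\Ind_\kappa(\Ccal^{I})$ onto $\Ind_\kappa(\Ccal^{I^{(\alpha)}})$ and $i', r'$ for those of $\Ind_\kappa(\Ccal)^{I}$ onto $\Ind_\kappa(\Ccal)^{I^{(\alpha)}}$, so that $ri \cong \id$, $r'i' \cong \id$, $E^{I^{(\alpha)}}_{\Ccal,\kappa}\, i \cong i'\, E^{I}_{\Ccal,\kappa}$ and $E^{I}_{\Ccal,\kappa}\, r \cong r'\, E^{I^{(\alpha)}}_{\Ccal,\kappa}$, and letting $H$ be a quasi-inverse of the equivalence $E^{I^{(\alpha)}}_{\Ccal,\kappa}$ supplied by \cref{prop:Result_for_Ialpha}, a short computation gives that $r H i'$ is a quasi-inverse of $E^{I}_{\Ccal,\kappa}$. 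I expect the only delicate point to be the bookkeeping of these pseudo-naturality and coherence isomorphisms; the reduction to an ordinal $\alpha < \kappa$ is the one place where regularity of $\kappa$ is genuinely needed, and everything else is routine.
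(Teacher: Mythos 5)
Your proposal is correct and follows essentially the same route as the paper: use well-foundedness to get a section of $I^{(\Ord)} \to I$, bound its ordinal component by some $\alpha < \kappa$ via $\kappa$-smallness and regularity, and exhibit $E^{I}_{\Ccal,\kappa}$ as a retract (up to isomorphism) of the equivalence $E^{I^{(\alpha)}}_{\Ccal,\kappa}$ from \cref{prop:Result_for_Ialpha}. Your passage to a skeleton to obtain a strict section is a harmless variant of the paper's use of a section up to isomorphism (condition \ref{prop:weak_fw:section_iso}), and your pseudo-naturality bookkeeping just makes explicit what the paper leaves implicit.
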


\begin{proof} One can freely assume that $I$ is $\kappa$-small. As $I$ is well-founded, then the projection $I^{(\Ord)} \to I$ admits a section up to isomorphism. The composite functor $ I \to I^{(\Ord)}  \to \Ord$ has a $\kappa$-small image, so it factors through an order preserving inclusion $\alpha \subset \Ord$ for $\alpha$ a $\kappa$-small ordinal.

  The full subcategory of objects of $I^{(\Ord)}$ whose image in $\Ord$ is in this $\kappa$-small ordinal identifies to $I^{(\alpha)}$, and hence we have a section (up to isomorphic) of the projection $I^{(\alpha)} \to I$.

  It follows that the functor $E^I_{\Ccal,\kappa}$ is a retract (up to natural isomorphisms) of the functor $E^{I^{(\alpha)}}_{\Ccal,\kappa}$, which is known to be an equivalence by \cref{prop:Result_for_Ialpha}, hence is itself an equivalence of category.

\end{proof}

\bibliography{/home/simon-henry/Dropbox/Latex/Biblio}{}

\begin{thebibliography}{10}

\bibitem{adamek1994locally}
Ji{\v{r}}{\'\i} Ad{\'a}mek and Ji{\v{r}}{\'\i} Rosick{\'y}.
\newblock {\em Locally presentable and accessible categories}, volume 189.
\newblock Cambridge University Press, 1994.

\bibitem{gregorybird}
Gregory~J Bird.
\newblock {\em Limits in 2-categories of locally presentable categories}.
\newblock PhD thesis, PhD thesis, University of Sydney. Circulated by the
  Sydney Category theory seminar, 1984.
\newblock \url{http://maths.mq.edu.au/~street/BirdPhD.pdf}.

\bibitem{carboni2001syntactic}
A~Carboni, MC~Pedicchio, and Ji{\v{r}}{\'\i} Rosick{\`y}.
\newblock Syntactic characterizations of various classes of locally presentable
  categories.
\newblock {\em Journal of Pure and Applied Algebra}, 161(1-2):65--90, 2001.

\bibitem{SGA4I}
{Deligne, P. and Boutot, JF and Grothendieck, A. and Illusie, L. and Verdier,
  JL}.
\newblock {\em {S{\'e}minaire de g{\'e}om{\'e}trie alg{\'e}brique du
  Bois-Marie, SGA 4 [1 over 2]: Theorie des topos et cohomologie etale des
  schemas}}.
\newblock Springer, 1973.

\bibitem{di2020gabriel}
Ivan Di~Liberti and Julia Ramos~Gonz{\'a}lez.
\newblock Gabriel--{U}lmer duality for topoi and its relation with site
  presentations.
\newblock {\em Applied Categorical Structures}, 28(6):935--962, 2020.

\bibitem{442055}
Simon Henry.
\newblock $\operatorname{Ind}({C^I})=\operatorname{Ind}{(C)^I}$?
\newblock MathOverflow.
\newblock \url{https://mathoverflow.net/q/442055} (version: 2023-03-03).

\bibitem{jacqmin2021stability}
Pierre-Alain Jacqmin and Zurab Janelidze.
\newblock On stability of exactness properties under the pro-completion.
\newblock {\em Advances in Mathematics}, 377:107484, 2021.

\bibitem{loregian2015coend}
Fosco Loregian.
\newblock Coend calculus.
\newblock {\em arXiv preprint arXiv:1501.02503}, 2015.

\bibitem{lurie2009higher}
Jacob Lurie.
\newblock {\em Higher topos theory}.
\newblock Number 170. Princeton University Press, 2009.

\bibitem{makkai1988strong}
Michael Makkai.
\newblock Strong conceptual completeness for first-order logic.
\newblock {\em Annals of pure and applied logic}, 40(2):167--215, 1988.

\bibitem{meyer1983completion}
Carol~Vincent Meyer.
\newblock {\em Completion of categories under certain limits}.
\newblock PhD thesis, McGill University, 1983.
\newblock
  \url{https://library-archives.canada.ca/eng/services/services-libraries/theses/Pages/item.aspx?idNumber=892984918}.

\bibitem{positselski2023notes}
Leonid Positselski.
\newblock Notes on limits of accessible categories.
\newblock {\em arXiv preprint arXiv:2310.16773}, 2023.

\bibitem{rogers2021toposes}
Morgan Rogers.
\newblock Toposes of monoid actions.
\newblock {\em arXiv preprint arXiv:2112.10198}, 2021.

\end{thebibliography}
\bibliographystyle{plain}

\end{document}